\documentclass[letterpaper, 12pt]{amsart}

\usepackage{fullpage}
\usepackage{graphicx}
\usepackage{amsmath, amssymb, amsfonts, amsthm}
\usepackage[foot]{amsaddr}
\usepackage{mathtools, mathrsfs}
\usepackage{tikz}

\usepackage{color}
\definecolor{darkblue}{rgb}{0,0,0.4}
\usepackage[plainpages=false,colorlinks=true,citecolor=blue,filecolor=black,linkcolor=red,urlcolor=darkblue]{hyperref}

\usepackage{url}


\usepackage{cancel}
\usepackage{enumerate}
\usepackage{empheq}

\newtheorem{definition}{Definition}[section]
\newtheorem{theorem}[definition]{Theorem}

\newtheorem{lemma}[definition]{Lemma}
\newtheorem{corollary}[definition]{Corollary}
\newtheorem{remark}{Remark}

\newcommand{\bond}{\mathrm{Bo}}

\newcommand{\R}{\mathbb{R}}

\newcommand{\N}{\mathbb{N}}
\renewcommand{\H}{\mathbf{H}}

\newcommand{\D}{\mathcal{D}}
\newcommand{\B}{\mathcal{B}}
\newcommand{\F}{\mathcal{F}}

\renewcommand{\O}{\mathcal{O}}
\renewcommand{\o}{o}
\newcommand{\FF}{\mathscr{F}}
\newcommand{\x}{\mathbf{x}}

\renewcommand{\u}{\mathbf{u}}
\newcommand{\n}{\mathbf{\hat{n}}}

\newcommand{\z}{\mathbf{\hat{z}}}
\newcommand{\X}{\mathbf{\hat x}}
\newcommand{\y}{\mathbf{\hat y}}
\allowdisplaybreaks


\begin{document}
\title{A Variational Characterization of \\ Fluid Sloshing with Surface Tension}
\author{Chee Han Tan}
\author{Christel Hohenegger}
\author{Braxton Osting}
\address{Department of Mathematics, University of Utah, Salt Lake City, UT 84112}
\email{tan@math.utah.edu, choheneg@math.utah.edu, and osting@math.utah.edu}

\keywords{fluid sloshing, surface tension, contact angle, calculus of variations}
\subjclass[2010]{49R05, 76M30, 76B45}  

\date{\today}

\begin{abstract}
We consider the sloshing problem for an incompressible, inviscid, irrotational fluid in an open container, including effects due to surface tension on the free surface. We restrict ourselves to a constant contact angle and seek time-harmonic solutions of the linearized problem, which describes the time-evolution of the fluid due to a small initial disturbance of the surface at rest. As opposed to the zero surface tension case, where the problem reduces to a partial differential equation for the velocity potential, we obtain a coupled system for the velocity potential and the free surface displacement. We derive a new variational formulation of the coupled problem and establish the existence of solutions using the direct method from the calculus of variations. 
We prove a domain monotonicity result for the fundamental sloshing eigenvalue. In the limit of zero surface tension, we recover the variational formulation of the  mixed Steklov-Neumann eigenvalue problem and  give the first-order perturbation formula for a simple eigenvalue. 
\end{abstract}

\maketitle

\section{Introduction}
In fluid dynamics, sloshing refers to the motion of the free surface of a liquid inside a container. 
Spilling and splashing of a fluid is possible if the sloshing amplitude is large enough. 
Indeed, sloshing of a cup of coffee can devastate a perfectly good day \cite{Mayer:2012aa}. Examples of more significant consequences due to sloshing  include the free surface effect in ships and trucks transporting oil and liquified natural gas (LNG) \cite{Abramson:1976aa, Faltinsen:2009aa} and sloshing of liquid propellant in spacecraft tanks and rockets \cite{Abramson:1966aa, Ibrahim:2005aa}.  

LNG carriers usually operate either fully loaded or nearly empty, but there has been a growing demand for membrane-type LNG carriers that can operate with cargo loaded to any filling level. Experimental and numerical studies show that the coupling effect between sloshing dynamics inside tanks and ship motions can be significant at certain frequencies of partially filled tanks, where violent sloshing generates high impact pressure on the tank surfaces and compromises structural safety. As such, prediciting and understanding the natural sloshing frequencies, modes, and impact load at partially filled levels are of great concern to the safety and operability of LNG carriers close to an LNG terminal and remain one of the most crucial design aspects in LNG cargo containment system. 

Since Robert Goddard's first launch of a liquid propellant rocket in 1926, scientists and engineers have worked to better understand the sloshing behavior of propellants in their tanks. This is important not only in terms of reducing costs and increasing efficiency of future spacecraft designs but also in minimizing potential impacts especially on flight safety, since violent sloshing fuels can, for example, produce highly localized impact loads and pressure on tank walls or affect the spacecraft's guidance system. There are many instances  where space missions were either deemed a failure or could not be completed due to sloshing \cite{Kyle:1957aa, Hydro:1965aa, Jones:1995aa, Strikwerda:2001aa, Schlee:2007aa}. For instance, in March 2007, the SpaceX Falcon 1 vehicle tumbled out of control, when an oscillation appeared in the upper stage control system approximately 90 seconds into the burn and instability grew in pitch. It was verified by third party industry experts that cryogenic liquid oxygen (LOX) sloshing was the primary contributor to this instability \cite{Falcon1:2007aa}.

Recent advances in computational fluid dynamics (CFD) tools have made accurate numerical modeling of sloshing dynamics and extraction of mechanical parameters such as sloshing frequency and sloshing mass center possible \cite{Peugeot:2010aa}. However, it requires extensive experimental validation and verification in microgravity or zero gravity environment, since fluid behaves in an unpredictable manner due to the absence of gravity. To benchmark and expand CFD tools to characterize sloshing dynamics, engineers with NASA together with researchers from the Florida Institutute of Technology and the Massachusetts Institute of Technology designed the SPHERES-Slosh experiment (SSE), carried aboard at the International Space Station. This investigation is planned to collect valuable data and information on how liquids move around inside of a container in the presence of external force. A description of design details of the SSE can be found in \cite{Chintalapati:2013aa, Lapilli:2015aa}. 

In this paper, we study the linearized sloshing problem of an incompressible, inviscid, irrotational fluid in containers, including surface tension effects on the free surface.

\subsection{Surface tension effects.}\label{sec:SurfTenEff}
Surface tension is present at all fluid interfaces, and it manifests itself in nature, most commonly in capillary phenomena such as the rise of water up a capillary tube. Surface tension, defined as force per unit length, can be explained in terms of surface force or surface energy \cite{Leal:2007aa, Bush:2013aa}. Roughly speaking, it is the intermolecular force required to contract the liquid surface to its minimal surface area. Geometrically, including surface tension forces is equivalent to considering the curvature of the interface. If we denote by $\rho,g,T,l$ the density of the fluid, gravitational acceleration, surface tension, and some characteristic length scale of the system respectively, and assume that $\rho,T$ are constant, then the dimensionless parameter $\bond = \rho gl^2/T $, known as the Bond-E\"otv\"os number \cite{Ibrahim:2005aa}, measures the importance of the surface tension force relative to the gravitational force. For $\bond\gg 1$, surface tension is assumed to be negligible and this is often the case for fluids in large containers under a regular gravity field. However,  if $\bond\ll 1$, then surface tension is not negligible anymore; this occurs when one is examining sloshing behavior in a microgravity environment or if the characteristic length of the interface is much smaller compared to the capillary length $l_c^2=T/(\rho g)$. 

Closely related to the concept of surface tension is that of the contact angle, in other words the angle of contact between the solid and the liquid-air interface along the line of intersection between the container's wall and the fluid free surface, known as the contact line \cite{Dussan:1979aa}. On one hand, the contact angle is a geometrical quantity uniquely defined as a dot product, while on the other hand it is a physical quantity which quantifies the wettability of a solid surface. In the static case, the resulting condition is known as the Young's equation and can be derived from an energy minimization argument on the contact line \cite{Moiseyev:1968aa, Finn:1986aa}. In the dynamic case, accurately describing the contact angle remains poorly understood, mainly due to contact angle hysteresis. We will further discuss the contact angle  in Section \ref{sec:ContactAngle}.

\subsection{Sloshing problem with surface tension.}\label{sec:Setup}
Consider an irrotational flow of an incompressible, inviscid fluid occupying a bounded region $\D_T\subset\R^3$ in a simply connected container. The Cartesian coordinates $\tilde\x = (\tilde x,\tilde y,\tilde z)$ are chosen in such a way that the static free surface (or static meniscus), denoted by $\F$, lies in the $\tilde x$-$\tilde y$ plane and the $\tilde z$-axis is directed upward.  Here, $\D_T$ is a bounded simply connected Lipschitz domain; in particular its boundary $\partial\D_T$ has no cusps. $\partial\D_T$ consists of two parts: the (evolving) free surface $\F_T$ defined by
$ \F_T = \{(\tilde x,\tilde y,\tilde z)\in\R^3\colon\tilde z=\tilde\eta(\tilde x,\tilde y,\tilde t)\}, $
where $\tilde\eta$ is the free surface displacement, together with the wetted boundary $\B=\partial\D_T\setminus\F_T$. Moreover, the container's wall over which the contact line moves is vertical. The subscripts on $\D_T, \F_T$ are used to denote  time-dependence. 

The static meniscus $\F$ is assumed to intersect the vertical container wall orthogonally and this corresponds to a 90$^\circ$ (static) contact angle and, together with the assumption that the wall is vertical near the free surface,  implies  $\n_\B(x) = \n_{\partial\F}(x)$ for all $x\in\partial\F$; see Figure \ref{f:diagram}.  Another consequence is that $\F$ is a flat interface on the plane $\{\tilde z=0\}$; this will be proved in Section \ref{sec:ContactAngle}. One can think of $\F_T$ as a small perturbation of $\F$.

We give a brief description of the water waves equations describing fluid motion in $\D_T$; details of the derivation can be found in Appendix \ref{sec:AppA}. We denote by $\tilde \u(\tilde\x,\tilde t)$ the velocity field of the fluid. Incompressibility and irrotationality imply the existence of a velocity potential, denoted $\tilde\phi=\tilde\phi(\tilde\x,\tilde t)$, satisfying Laplace's equation in $\D_T$. The Neumann boundary condition is imposed on $\B$, while the classical kinematic and dynamic boundary conditions are imposed on $\F_T$, the latter of which can be expressed in terms of $\tilde\phi$ using Bernoulli's principle for an ideal fluid with unsteady irrotational flow. Nondimensionalizing the system with dimensionless variables
\begin{equation} \label{eq:NonDim} 
\x = \dfrac{\tilde\x}{a}, \quad t = \sqrt{\dfrac{g}{a}}\tilde t, \quad\phi = \dfrac{\tilde\phi}{a\sqrt{ga}}, \quad\eta = \dfrac{\tilde\eta}{a}, 
\end{equation}
where $a>0$ is some characteristic length scale of the system, we obtain the following system of dimensionless nonlinear partial differential equations:

\begin{subequations} \label{eq:Slosh1}
\begin{alignat}{2}
\Delta\phi &=0 && \qquad\textrm{ in }\D_T, \\ 
\partial_\n\phi &= 0 && \qquad\textrm{ on }\B, \\
\eta_t + \nabla\phi\cdot\nabla (\eta - z) &= 0 && \qquad\textrm{ on }\F_T, \label{eq:Slosh1c}\\
\phi_t + \frac{1}{2}|\nabla\phi|^2 + \eta &= -\frac{1}{\bond}\nabla\cdot\n_{\F_T} && \qquad\textrm{ on }\F_T, \label{eq:Slosh1d}\\
\n_\B\cdot\n_{\F_T} & =0 && \qquad\textrm{ on }\partial\F_T. \label{eq:Slosh1e}
\end{alignat}
\end{subequations}
Here, $\nabla = (\partial_x,\partial_y,\partial_z)$, $\Delta = \partial_x^2 + \partial_y^2 + \partial_z^2$, $\n_\B,\n_{\F_T}$ are the outward unit normal to the boundary $\B$ and the free surface $\F_T$, respectively. By $\partial_n$ we mean the normal derivative of a function. We discuss in details the contact line boundary condition \eqref{eq:Slosh1e} in Section \ref{sec:ContactAngle}.

\begin{figure}[t]
\centering
\begin{tikzpicture}[thick, scale=0.5]
\draw plot[smooth cycle] coordinates {(0,0) (0.5,-0.5) (2,-0.3) (4,-0.5) (5,0) (4.5,0.6) (3,0.5) (1.2,0.7) (0.5,0.5)};
\draw (0,0)--(0,-3.6);
\draw (5,0)--(5,-3.6);
\draw [dashed] (0.5,0.5)--(0.5,-3.1);
\draw [dashed] (4.5,0.6)--(4.5,-3);
\draw [fill=gray!50] plot[smooth] coordinates {(0,-1.8) (0.5,-2.3) (2,-2.1) (4,-2.3) (5,-1.8)};
\draw [dashed, fill=gray!50] plot[smooth] coordinates {(5,-1.8) (4.5,-1.2) (3,-1.3) (1.2,-1.1) (0.5,-1.3) (0,-1.8)};
\draw plot[smooth] coordinates {(0,-3.6) (0.5,-4.1) (2,-3.9) (4,-4.1) (5,-3.6)};
\draw [dashed] plot[smooth] coordinates {(5,-3.6) (4.5,-3) (3,-3.1) (1.2,-2.9) (0.5,-3.1) (0,-3.6)};
\draw [dashed] plot[smooth] coordinates {(0.5,-3.1) (0.5,-4) (0.8,-5)};
\draw [dashed] plot[smooth] coordinates {(4.5,-3) (4.6,-4.3) (4.5, -5) (4,-5.6)};
\draw (0,-3.6) to[out=-90,in=-170] (0,-7);
\draw (5,-3.6) to[out=-90,in=-30] (5,-7);
\draw plot[smooth] coordinates {(0,-7) (1.3,-6.5) (2.5,-6) (4,-6.2) (5,-7)};
\draw [dashed] plot[smooth] coordinates {(-0.4,-5.6) (0.8,-5) (4,-5.6) (5.3,-5.3)};
\node at (2.7,-1.6) {$\F$};
\node at (2.5,-4.9) {$\D$};
\node at (-0.8,-5) {$\B$};
\node at (7.1,-1.8) {$\n_{\partial\F}$};
\node at (7.2,-4.8) {$\n_\B$};
\draw[->] (5,-1.8)--(6.3,-1.8);
\draw[->] (5.3,-5.3)--(6.6,-4.8);
\end{tikzpicture}
\caption{An illustration of the domain $\D$ with boundary $\partial\D = \bar\F\cup\B$ for the linearized problem in \eqref{eq:SloshingTension}. We assume $\n_{\partial\F}$ agrees with $\n_\B$ on $\partial\F$.}
\label{f:diagram}
\end{figure}

We further simplify \eqref{eq:Slosh1} as follows. Consider an equilibrium solution $(\phi_0,\eta_0)=(c,0)$ of \eqref{eq:Slosh1}, where $c$ is any constant scalar function (which gives zero velocity field). Assuming the free surface displacement $\eta$ is a small perturbation of $\{z=0\}$,  we look for solutions of the form $\phi(x,y,z,t) = c + \varepsilon\hat\phi(x,y,z,t)$, $\eta(x,y,t) = \varepsilon\hat\eta(x,y,t)$,  where $\varepsilon>0$ is some small parameter and collect  $\O(\varepsilon)$ terms. 
Next, we Taylor expand $\hat\phi$ and its derivatives around $z=0$. This transforms the boundary conditions,  \eqref{eq:Slosh1c} and \eqref{eq:Slosh1d}, from $\F_T$ to $\F$.  

Finally, time harmonic solutions (with angular frequency $\omega$ and phase shift $\delta$) are sought, via the ansatz 
\begin{align*} 
\hat\phi(x,y,z,t) &= \Phi(x,y,z)\cos(\omega t + \delta), \\ 
\hat\eta(x,y,t) &=\xi(x,y)\sin(\omega t + \delta), 
\end{align*}
where $\Phi(x,y,z)$ and $\xi(x,y)$ are the sloshing velocity potential and height respectively.  We  obtain the linearized eigenvalue problem for $(\omega,\Phi,\xi)$, which we refer to as the \emph{sloshing problem with surface tension}:
\begin{subequations}\label{eq:SloshingTension}
\begin{alignat}{2}
\label{eq:SloshingTension1} \Delta\Phi &=0 && \qquad\textrm{ in }\D, \\ 
\label{eq:SloshingTension2} \partial_\n\Phi &= 0 && \qquad\textrm{ on }\B, \\ 
\label{eq:SloshingTension3} \Phi_z &= \omega\xi && \qquad\textrm{ on }\F, \\ 
\label{eq:SloshingTension4} \xi - \frac{1}{\bond}\Delta_\F\xi  &= \omega\Phi && \qquad\textrm{ on }\F, \\
\label{eq:SloshingTension5} \partial_\n\xi &=0 && \qquad\textrm{ on }\partial\F.  
\end{alignat}
\end{subequations}
Here, $\nabla_\F\coloneqq (\partial_x,\partial_y)$, $\Delta_\F\coloneqq\nabla_\F\cdot\nabla_\F=\partial_{xx} + \partial_{yy}$, $\Phi_z=\partial_z\Phi$ and $\D$ is the fixed reference domain, with boundary $\partial\D=\F\cup\B$; see Figure \ref{f:diagram}. This problem must also be complemented with the condition $\int_\F\xi\, dA=0$, which amounts to mass conservation of the fluid. Since we are only interested in nontrivial solutions of \eqref{eq:SloshingTension}, we exclude the trivial solution $(\omega_0,\Phi_0,\xi_0)=(0,1,0)$ by imposing the orthogonality condition $\int_\F\Phi\, dA = 0$. Interestingly, the spectral parameter, $\omega$, appears in the boundary condition on the free surfaces, \eqref{eq:SloshingTension3} and \eqref{eq:SloshingTension4}.

\subsection{Zero suface tension.} \label{sec:ZeroSurfTen}
We summarize some well-known results for the case of zero surface tension corresponding to $\bond=\infty$, which has received considerable attention in the literature; see, for example,  \cite{Troesch:1960aa, Troesch:1965aa, Fox:1983aa, Kozlov:2004aa,  Ibrahim:2005aa,  Banuelos:2010aa, Kozlov:2011aa, Kulczycki:2014aa}. 

When $\bond = \infty$, we see from \eqref{eq:SloshingTension4} that the free surface height $\xi$ is proportional to the sloshing mode $\Phi$ restricted to the free surface $\F$ and can be eliminated from \eqref{eq:SloshingTension}. This yields the greatly simplified eigenvalue problem for $(\omega,\Phi)$
\begin{subequations}\label{eq:Steklov}
\begin{alignat}{2}
\Delta\Phi &=0 && \qquad\textrm{ in }\D, \\ 
\partial_\n\Phi &= 0 && \qquad\textrm{ on }\B, \\
\Phi_z & = \omega^2 \Phi && \qquad\textrm{ on }\F,  
\end{alignat}
\end{subequations}
which is commonly referred to as the \emph{mixed Steklov-Neumann eigenvalue problem} or the \emph{sloshing problem}. It is known \cite{Moiseev:1964aa, Kopachevsky:2012aa} that, if $\D$ and $\F$ are Lipschitz domains, then \eqref{eq:Steklov} has a discrete sequence of eigenvalues
\[ 0 = \omega_0^2 <\omega_1^2\le\omega_2^2\le\cdots \ \ \textrm{ with } \omega_n^2\longrightarrow\infty\textrm{ as }n\longrightarrow\infty. \]
The corresponding eigenfunctions $\{\Phi_n\}_{n=0}^\infty$ belong to the Sobolev space $H^1(\D)$ and when restricted to the free surface $\F$ form a complete orthogonal set in $L^2(\F)$. The eigenvalues, $\omega_n^2$, can be characterized by means of a variational principle \cite{Moiseev:1964aa,Troesch:1965aa}:
\begin{equation}\label{eq:SteklovVF}
\inf_{\Phi\in H_n} \ \   \int_\D |\nabla\Phi|^2\, dV  \qquad
\textrm{subject to} \ \  \|\Phi\|_{L^2(\F)} =1, 
\end{equation}
where $H_n$ is  defined by
$$ H_n = \left\{\Phi\in H^1(\D)\colon \int_\F\Phi\Phi_j\, dA = 0\textrm{ for all }j=0,1,\ldots,n-1\right\}, $$
where $\Phi_j$ is the $j$-th eigenfunction of \eqref{eq:Steklov}. 
Here $\Phi_0$ is the constant solution corresponding to $\omega_0=0$.

It is worth mentioning that the fundamental eigenfunction $\Phi_1$ corresponding to the fundamental (first nontrivial) eigenvalue $\omega_1^2$ can be used to determined the ``high spot", the maximal elevation of the free surface of the sloshing fluid. See, for example, \cite{Lamb:1932aa} for such relation. Several results about the location of high spots for different container geometries in two and three dimensions were obtained in \cite{Kulczycki:2009aa, Kulczycki:2011aa, Kulczycki:2012aa}. Moreover, it was shown in \cite{Kulczycki:2009aa} that for vertical-walled containers with constant depth, the question about high spots is equivalent to the \emph{hot spots conjecture} formulated by Rauch. See \cite{Burdzy:2013aa} for a recent review.

\subsection{Main results.}
Modeling irrotational water waves using variational principles has been investigated recently in \cite{Clamond:2012aa}. There are mainly two variational principles: the Hamiltonian of Petrov-Zakharov \cite{Petrov:1964aa, Zakharov:1968aa} and the Lagrangian of Luke \cite{Whitham:1965aa, Luke:1967aa, Whitham:1967aa}. In this paper, we derive a variational principle similar to Luke, in the sense that it is of free boundary type \cite[pp 208]{Courant:1953aa}.  Let $\H$ be the direct sum of Sobolev spaces defined by
\[ \H = \left\{ (\Phi,\xi)\in H^1(\D)\times H^1(\F)\colon \int_\F\Phi\, dA = 0 = \int_\F\xi\, dA\right\}. \]
Define the \emph{Dirichlet energy} of $\Phi\in H^1(\D)$ and the \emph{free surface energy} of $\xi\in H^1(\F)$ by
\[ D[\Phi] = \frac{1}{2}\int_\D |\nabla\Phi|^2\, dV \qquad\text{and}\qquad  S[\xi] = \frac{1}{2}\int_\F\left(\xi^2 + \frac{1}{\bond}|\nabla_\F\xi|^2\right)\, dA, \]
respectively. Our main result is the following theorem giving a variational characterization of the fundamental   eigenvalue of \eqref{eq:SloshingTension}. 
\begin{theorem} \label{thm:VFsum_firstmode}
There exists a minimizer $(\Phi_1,\xi_1)$ to the following minimization problem: 
\begin{equation}\label{eq:VFsum_firstmode}
\inf_{(\Phi,\xi)\in\H} \ \   D[\Phi] + S[\xi]  \quad
\textrm{subject to} \ \  \langle \Phi,\xi \rangle_{L^2(\F)} =1 .
\end{equation}
Moreover, $(\Phi_1,\xi_1)$ is an eigenfunction of \eqref{eq:SloshingTension} in the weak sense with corresponding eigenvalue $\omega_1 = D[\Phi_1] + S[\xi_1]$.
\end{theorem}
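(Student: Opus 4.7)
The plan is to apply the direct method of the calculus of variations to produce a minimizer in the Hilbert space $H^1(\D)\times H^1(\F)$, and then to extract the weak Euler--Lagrange equations via a Lagrange multiplier argument. The three analytic ingredients are (i) coercivity of the objective $D[\Phi]+S[\xi]$ on the affine constraint set, (ii) weak lower semicontinuity of $D$ and $S$, and (iii) stability of the bilinear constraint $\langle\Phi,\xi\rangle_{L^2(\F)}=1$ under weak convergence. The linear mean-zero conditions defining $\H$ are continuous, hence automatically preserved in weak limits.

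For existence, let $(\Phi_n,\xi_n)\subset\H$ be a minimizing sequence. The free-surface energy already controls $H^1(\F)$ since $S[\xi]\ge \tfrac{1}{2}\min(1,\bond^{-1})\|\xi\|_{H^1(\F)}^2$. For the potential, the mean-zero trace condition $\int_\F\Phi_n\,dA=0$ combined with a Poincar\'e inequality on the Lipschitz domain $\D$ (of the type ``zero mean on a piece of the boundary'') gives $\|\Phi_n\|_{L^2(\D)}\le C\|\nabla\Phi_n\|_{L^2(\D)}$, so $\{\Phi_n\}$ is bounded in $H^1(\D)$. Extract a subsequence with $\Phi_n\rightharpoonup\Phi_1$ in $H^1(\D)$ and $\xi_n\rightharpoonup\xi_1$ in $H^1(\F)$. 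Compactness of the trace $H^1(\D)\to L^2(\F)$ (using that $\partial\D$ is Lipschitz and cusp-free) and the Rellich embedding $H^1(\F)\hookrightarrow L^2(\F)$ upgrade these to \emph{strong} $L^2(\F)$ convergence, so $\int_\F\Phi_n\xi_n\,dA \to \int_\F\Phi_1\xi_1\,dA = 1$ and the constraint persists. Convexity and continuity of $D$ and $S$ yield weak lower semicontinuity, and hence $(\Phi_1,\xi_1)\in\H$ realizes the infimum.

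For the Euler--Lagrange step, introduce a multiplier $\lambda$ for the constraint and compute the first variation against admissible $(\phi,\eta)\in H^1(\D)\times H^1(\F)$ with $\int_\F\phi\,dA=\int_\F\eta\,dA=0$, obtaining
\begin{align*}
\int_\D \nabla\Phi_1\cdot\nabla\phi\,dV &= \lambda\int_\F \phi\,\xi_1\,dA,\\
\int_\F\left(\xi_1\eta + \bond^{-1}\nabla_\F\xi_1\cdot\nabla_\F\eta\right)dA &= \lambda\int_\F\Phi_1\,\eta\,dA.
\end{align*}
Because $\int_\F\xi_1\,dA=0=\int_\F\Phi_1\,dA$, subtracting off the mean of any test function shows these identities in fact hold for \emph{arbitrary} $\phi\in H^1(\D)$ and $\eta\in H^1(\F)$. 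These are precisely the weak formulations of \eqref{eq:SloshingTension1}--\eqref{eq:SloshingTension3} and of \eqref{eq:SloshingTension4}--\eqref{eq:SloshingTension5}, respectively, with $\omega=\lambda$. Testing the first identity with $\phi=\Phi_1$ (admissible since $\Phi_1\in\H$) gives $2D[\Phi_1]=\lambda$, and testing the second with $\eta=\xi_1$ gives $2S[\xi_1]=\lambda$; adding these, $\omega_1=\lambda=D[\Phi_1]+S[\xi_1]$.

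The main obstacle is the nonstandard bilinear constraint $\langle\Phi,\xi\rangle_{L^2(\F)}=1$, which couples the two unknowns across the free surface and, being quadratic, is not weakly closed in $H^1(\D)\times H^1(\F)$. It is precisely the compactness of the trace on the Lipschitz domain $\D$, together with the compact Sobolev embedding on $\F$, that allows us to pass to the limit in the constraint and conclude that the infimum is attained.
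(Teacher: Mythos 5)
Your proposal is correct and follows essentially the same route as the paper: the direct method with coercivity obtained from the boundary-portion Poincar\'e--Wirtinger inequality and the lower bound $S[\xi]\ge \tfrac12\min(1,\bond^{-1})\|\xi\|_{H^1(\F)}^2$, weak lower semicontinuity from convexity, compactness of the trace to pass the bilinear constraint to the limit, and a Lagrange-multiplier computation tested against $(\Phi_1,\xi_1)$ to get $\omega_1=\lambda=D[\Phi_1]+S[\xi_1]$. The only cosmetic difference is that you enforce the mean-zero conditions by working in the subspace $\H$ and then extending the weak identities to arbitrary test functions by subtracting means, whereas the paper introduces additional multipliers $\gamma_1,\gamma_2$ and shows they vanish; the two devices are equivalent.
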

We also prove a Rayleigh-Ritz generalization of Theorem \ref{thm:VFsum_firstmode} for higher eigenvalues; see Theorem \ref{thm:VFsum_highmode}. An interesting feature of both variational characterizations are the constraints involving the $L^2$ inner product on the free surface $\F$, requiring the sloshing mode and the free surface height to have unit inner product and be orthogonal to lower modes; see Lemma \ref{thm:Property1}. 

\begin{remark}It is not difficult to show that if $(\phi(x,y,z,t),\xi(x,y,t))$ satisfies the time-dependent linear sloshing problem \eqref{eq:Slosh2}, then the quantity $E(t) = D[\phi(t)] + S[\eta(t)]$ is conserved.  
\end{remark}

In Theorem \ref{thm:domain}, we prove a domain monotonicity result, analogous to a result in \cite{Moiseev:1964aa},  for the fundamental eigenvalue of \eqref{eq:SloshingTension}. In Section \ref{sec:CompKopachevsky}, we describe the variational formulation for the sloshing problem \eqref{eq:SloshingTension} of Kopachevsky and Krein \cite{Kopachevsky:2012aa} and compare to the present work. 

In Corollary \ref{cor:ZeroSurfaceTension}, we establish that in the limit of zero surface tension, ($\bond = \infty$), the variational principle in Theorem \ref{thm:VFsum_firstmode} reduces to the mixed Steklov-Neumann variational principle \eqref{eq:SteklovVF}.  In Theorem \ref{thm:AsympFirstOrder}, we give the first-order perturbation formula for a simple eigenvalue satisfying  \eqref{eq:SloshingTension} in the limit where the Bond number is large. Finally, we illustrate Theorem \ref{thm:AsympFirstOrder} with a cylindrical container, where the exact solution is known.

\subsection{Outline.} This paper is structured as follows. We begin by discussing the contact angle and its role in contact line boundary condition \eqref{eq:Slosh1e} in Section \ref{sec:ContactAngle}. 
In Section \ref{sec:Prelim}, we prove preparatory results for  Theorem \ref{thm:VFsum_firstmode}. 
We  prove Theorem \ref{thm:VFsum_firstmode} in Section \ref{sec:Proof} and provide a Rayleigh-Ritz generalization of Theorem \ref{thm:VFsum_firstmode} for higher eigenvalues. Section \ref{sec:Large Bond Number Asymptotics} describes the asymptotic behavior of the eigenvalue $\omega$ in the limit where $\bond$ is large. We conclude in Section \ref{sec:disc} with a discussion. In Appendix \ref{sec:AppA}, we give a physical derivation of the sloshing problem with surface tension, \eqref{eq:SloshingTension}.

\section{Contact angle and its relation with contact line boundary condition} \label{sec:ContactAngle}
It can be seen in Appendix \ref{sec:AppA} that including surface tension effects on the free surface $\F_T$ introduces additional terms involving second derivatives of $\eta$ onto the dynamic boundary condition \eqref{eq:Slosh1d} on $\F_T$. It is thus deemed necessary to impose a boundary condition on $\partial\F_T$ so that the sloshing problem \eqref{eq:SloshingTension} is well-posed. Such a boundary condition, commonly referred to as the \emph{contact-line boundary condition}, controls the free surface height at the contact point, {\it{i.e.}} the point at which the contact line intersects the container's wall \cite{Hocking:1987aa}. 

The contact angle, defined in Subsection \ref{sec:SurfTenEff}, plays an important role in describing the contact line behavior. As first described by Young in his celebrated essay \cite{Young:1805aa}, the \emph{static contact angle} $\theta_s$ (also called Young's angle) is characterized by the following equation
$ T_{LG}\cos\theta_s = T_{SG} - T_{SL}, $ 
where $T_{LG}, T_{SG}, T_{SL}$ represents the liquid-gas, solid-gas, and solid-liquid surface tension, respectively. Once the contact line is in motion, one should expect the contact angle to be different from $\theta_s$; such contact angle is then called the \emph{dynamic contact angle} $\theta_d$. Accordingly, the static contact angle should remain unchanged in static conditions; however, experimental evidence  \cite{Dussan:1979aa, Cocciaro:1991aa, Cocciaro:1993aa}  demonstrates that this is false in general. In fact, the static contact angle lies between a range $\theta_R\le \theta_s\le \theta_A$, where $\theta_R$ and $\theta_A$ are the so-called receding and advancing contact angle respectively. Such  behavior is known as the \emph{contact angle hysteresis}, and surface roughness and/or heterogeneity of the container wall seem to be the reason behind this.

It is therefore extremely difficult to derive boundary conditions that takes into account both the contact angle hysteresis and the dynamic behavior of the contact line. We list three contact-line boundary conditions proposed in the study of capillary-gravity waves, each of which works under different assumptions. See \cite{Pomeau:2002aa} for a recent review.

\begin{enumerate}
\item \emph{Free-end edge constraint} (Neumann-type), which has the form 
$\partial_\n\eta = 0$ on $\partial\F_T$, where $\n$ is the normal to the solid boundary drawn into the fluid. This is a standard approach in studying capillary-gravity waves. This occurs if one assumes that the contact line can freely slip across the container's wall and $\theta_d\approx\theta_s$. Reynolds and Satterlee consider such a special case in \cite{Reynolds:1966aa}.

\item \emph{Pinned-end edge constraint} (Dirichlet-type), which has the form $\eta_t=0$ on $\partial\F_T$. This corresponds to fixing the contact line at the contact point (hence the word pinned) and assuming the dynamic contact angle $\theta_d$ lies within the interval $(\theta_R, \theta_A)$. This was first suggested by Benjamin and Scott \cite{Benjamin:1979aa} and investigated in \cite{Graham:1983aa, Graham:1984aa, Benjamin:1985aa, Henderson:1994aa}; however, these are all restricted to flat static interface or $\theta_s=\pi/2$. The case of curved static interface or $\theta_s\neq\pi/2$ was recently investigated by \cite{Shankar:2007aa}. It is worth mentioning that while this boundary condition makes the theoretical analysis much more difficult but still possible, it is not compatible with the kinematic condition at the container's wall \cite{Shankar:2005aa}. 

\item \emph{Wetting boundary condition} (Robin-type), which has the form
$ \eta_t = \lambda\partial_\n\eta$ on $\partial\F_T$, where $\lambda$ is some constant measuring the ratio of the contact line velocity to the change in contact angle. Observe that this model includes, as limiting cases, both the free-end $(\lambda=\infty)$ and the pinned-end $(\lambda=0)$ edge conditions. This was first proposed by Hocking \cite{Hocking:1987aa, Hocking:1987bb} and investigated by Miles  \cite{Miles:1990aa, Miles:1991aa, Miles:1992aa, Miles:1996aa} and Shen and Yeh \cite{Shen:1999aa}. The assumptions needed here are that the contact angle hysteresis $\theta_A-\theta_R$ is negligibly small, $\theta_s=\pi/2$, and $\theta_d$ is an linear function of the contact line velocity. 
\end{enumerate}

In this paper, we assume that the static contact angle is $\theta_s=\pi /2$ and the contact angle hysteresis is negligibly small; this is physically achieved by a container with smooth walls and a fluid that is free of contamination.  It can then be shown  \cite{Shankar:2005aa} that the contact angle remains unchanged, {\it{i.e.}} $\theta_d=\pi/2$. Assuming that the contact line slips freely, we can write down the boundary condition \eqref{eq:Slosh1e}
\[ 0 = \cos(\theta_d) = -\n_\B\cdot\n_{\F_T}\qquad\textrm{ on }\partial\F_T. \]

Another consequence of this assumption is that the static meniscus $\F$ is flat everywhere. Assuming constant surface tension $T_{LG} = T$, its shape, which we denote by $S(\tilde x,\tilde y)$, is governed by the Young-Laplace equation \cite{Finn:1986aa, Bush:2013aa}:
\begin{equation}
\label{eq:YoungLaplace} \rho gS = -T\nabla\cdot\n_\F. 
\end{equation}
Since $\n_\B = \n_{\partial\F}$ on $\partial\F$ and $\theta_s=\pi/2$, the contact line boundary condition becomes $\partial_\n S=0$ on $\partial\F$. Next, assuming $S_{\tilde x\tilde x}^2 + S_{\tilde y\tilde y}^2\ll 1$ (small slope approximation), we can linearize the Young-Laplace equation; upon nondimensionalizing the system, we obtain the dimensionless linearized Young-Laplace equation
\begin{subequations} \label{eq:NDYoungLaplace}
\begin{alignat}{2}
s_{xx} + s_{yy} = \Delta_\F s & = (\bond)s && \qquad\textrm{ in }\F, \\
\partial_\n s & = 0 && \qquad\textrm{ on }\partial\F.
\end{alignat}
\end{subequations}
The trivial solution $s(x,y)\equiv 0$ exists for problem \eqref{eq:NDYoungLaplace} but since $\bond$ is assumed to be positive, an energy argument shows that there is no nontrivial solution.


\section{Preliminary results} \label{sec:Prelim}
In this section we collect a range of auxiliary results that are required in the proof of Theorems \ref{thm:VFsum_firstmode} and \ref{thm:VFsum_highmode}. 

\subsection{Properties of Solutions to \eqref{eq:SloshingTension}.} \label{sec:Prelim_Properties}
\begin{theorem} \label{thm:Property1}
Suppose $(\omega,\Phi,\xi), (\omega_j,\Phi_j,\xi_j), (\omega_k,\Phi_k,\xi_k)$ are weak solutions of \eqref{eq:SloshingTension}. 
\begin{itemize}
\item[(a)] If $\omega\neq 0$, then $\langle\Phi,1\rangle_{L^2(\F)} = 0 = \langle\xi,1\rangle_{L^2(\F)}$.
\item[(b)] We have the identities 
\begin{subequations}\label{eq:P1swap}
\begin{align}
\label{eq:P1swap1} \int_\F (\Phi_j)_z\Phi_k\, dA & = \int_\F \Phi_j(\Phi_k)_z\, dA, \\
\label{eq:P1swap2} \int_\F (\Delta_\F\xi_j)\xi_k\, dA & = \int_\F \xi_j(\Delta_\F\xi_k)\, dA.
\end{align}
\end{subequations}
\item[(c)] If $|\omega_j|\neq |\omega_k|$, the following orthogonality condition holds: 
\begin{equation}
\label{eq:P2ortho} \langle\Phi_j, \xi_k\rangle_{L^2(\F)} = 0 = \langle\xi_j, \Phi_k\rangle_{L^2(\F)}. 
\end{equation}
\end{itemize}
\end{theorem}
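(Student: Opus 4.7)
The plan is to exploit the harmonicity of $\Phi$ in $\D$ together with the Neumann conditions on $\B$ and $\partial\F$, via repeated applications of the divergence theorem and Green's second identity; the coupled boundary conditions on $\F$ then supply the algebraic relations needed in (a) and (c). I treat solutions as classical throughout, since for a weak solution the requisite integrations by parts are built into the variational formulation.

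For (a), I integrate $\Delta\Phi=0$ over $\D$ and apply the divergence theorem. The Neumann condition \eqref{eq:SloshingTension2} kills the flux through $\B$, while on $\F$ (which lies in $\{z=0\}$ with the fluid below) the outward normal derivative is $\Phi_z$, so $\int_\F \Phi_z\, dA = 0$. Substituting \eqref{eq:SloshingTension3} and using $\omega\neq 0$ gives $\langle\xi,1\rangle_{L^2(\F)}=0$. Integrating \eqref{eq:SloshingTension4} over $\F$ and using the two-dimensional divergence theorem together with \eqref{eq:SloshingTension5} kills the $\Delta_\F\xi$ term, yielding $\int_\F \xi\, dA = \omega\int_\F \Phi\, dA$; the preceding step and $\omega\neq 0$ then force $\langle\Phi,1\rangle_{L^2(\F)}=0$.

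For (b), both identities follow from Green's second identity. Applied to the harmonic pair $(\Phi_j,\Phi_k)$ on $\D$, the interior term vanishes by harmonicity, the boundary contribution on $\B$ vanishes by \eqref{eq:SloshingTension2}, and $\partial_\n=\partial_z$ on $\F$ converts the remaining boundary integrals into \eqref{eq:P1swap1}. Applied to $(\xi_j,\xi_k)$ on $\F$, the line integral over $\partial\F$ vanishes by \eqref{eq:SloshingTension5}, giving \eqref{eq:P1swap2}.

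For (c), the key idea is to pair each of the eigenvalue boundary conditions with the ``other'' eigenfunction and use the swap identities from (b). Multiplying \eqref{eq:SloshingTension3} for $\Phi_j$ by $\Phi_k$, integrating over $\F$, and then applying \eqref{eq:P1swap1} followed by \eqref{eq:SloshingTension3} for $\Phi_k$ yields
\[ \omega_j \langle \xi_j,\Phi_k\rangle_{L^2(\F)} \;=\; \omega_k \langle \Phi_j,\xi_k\rangle_{L^2(\F)}. \]
Testing \eqref{eq:SloshingTension4} for $\xi_j$ against $\xi_k$, and doing the symmetric calculation with the roles of $j,k$ exchanged, the $\int_\F \xi_j\xi_k\, dA$ terms cancel and \eqref{eq:P1swap2} matches the Laplacian contributions, producing the dual relation
\[ \omega_j \langle \Phi_j,\xi_k\rangle_{L^2(\F)} \;=\; \omega_k \langle \xi_j,\Phi_k\rangle_{L^2(\F)}. \]
Adding and subtracting these two identities yields $(\omega_j-\omega_k)$ and $(\omega_j+\omega_k)$ as factors multiplying the sum and the difference of the two inner products. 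Under the hypothesis $|\omega_j|\neq|\omega_k|$ both factors are nonzero, so the sum and difference each vanish, forcing $\langle\Phi_j,\xi_k\rangle_{L^2(\F)} = \langle\xi_j,\Phi_k\rangle_{L^2(\F)} = 0$. The only minor subtlety is the degenerate case $\omega_j=0$ or $\omega_k=0$, where the associated eigenfunction is the trivial one with $\xi\equiv 0$ and the orthogonality reduces to part (a).
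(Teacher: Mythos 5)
Your proof is correct and follows essentially the same route as the paper: parts (a) and (b) via the divergence theorem and Green's identity, and part (c) by combining the swap identities with the boundary conditions to obtain the two bilinear relations $\omega_j\langle\xi_j,\Phi_k\rangle_{L^2(\F)}=\omega_k\langle\Phi_j,\xi_k\rangle_{L^2(\F)}$ and $\omega_j\langle\Phi_j,\xi_k\rangle_{L^2(\F)}=\omega_k\langle\xi_j,\Phi_k\rangle_{L^2(\F)}$. The only cosmetic difference is that you finish by adding and subtracting these relations and using $\omega_j\pm\omega_k\neq 0$, whereas the paper packages the same linear algebra as a vanishing-determinant condition for a $2\times 2$ system in $(\omega_j,\omega_k)$.
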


\begin{proof}
Part (a) is obtained by simply integrating \eqref{eq:SloshingTension} over respective domains and applying divergence theorem. Part (b) is an easy consequence of the divergence theorem. We now prove part (c) using part (b). First, substituting \eqref{eq:SloshingTension3} for both $\Phi_j, \Phi_k$ into \eqref{eq:P1swap1} yields
\[ \omega_j\int_\F\xi_j\Phi_k\, dA = \int_\F(\Phi_j)_z\Phi_k\, dA = \int_\F\Phi_j(\Phi_k)_z\, dA= \omega_k\int_\F\Phi_j\xi_k\, dA. \]
Similarly, substituting \eqref{eq:SloshingTension4} for both $\xi_j,\xi_k$ into \eqref{eq:P1swap2} yields 
\[ \int_\F(-\omega_j\Phi_j + \xi_j)\xi_k\, dA = \int_\F\left(\frac{1}{\bond}\Delta_\F\xi_j\right)\xi_k\, dA = \int_\F\xi_j\left(\frac{1}{\bond}\Delta_\F\xi_k\right)\, dA = \int_\F\xi_j(-\omega_k\Phi_k + \xi_k)\, dA. \]
Rearranging these equations gives
\begin{subequations}
\begin{align}
\label{eq:P2ortho1} \omega_j\int_\F\xi_j\Phi_k\, dA - \omega_k\int_\F\Phi_j\xi_k\, dA & = 0 \\
\omega_j\int_\F\Phi_j\xi_k\, dA - \omega_k\int_\F\xi_j\Phi_k\, dA & = 0, 
\end{align}
\end{subequations}
which can be written as a linear system
\[ A\begin{pmatrix} \omega_j \\ \omega_k \end{pmatrix} =  \begin{pmatrix} 0 \\ 0 \end{pmatrix}, \textrm{ where } 
A = \begin{bmatrix} \langle\xi_j, \Phi_k\rangle_{L^2(\F)} & -\langle\Phi_j, \xi_k\rangle_{L^2(\F)} \\ \langle\Phi_j, \xi_k\rangle_{L^2(\F)} & -\langle\xi_j, \Phi_k\rangle_{L^2(\F)} \end{bmatrix}. \]
A nontrivial solution exists for the linear system if and only if $\det(A)= 0$, {\it i.e.}
\begin{align*}
\langle\Phi_j, \xi_k\rangle_{L^2(\F)}^2 - \langle\xi_j, \Phi_k\rangle_{L^2(\F)}^2 = 0 & \implies \langle\Phi_j, \xi_k\rangle_{L^2(\F)} = \pm \langle\xi_j, \Phi_k\rangle_{L^2(\F)}. 
\end{align*}
But using \eqref{eq:P2ortho1} and  $|\omega_j|\neq |\omega_k|$, we obtain \eqref{eq:P2ortho}.
\end{proof}


\begin{lemma}\label{thm:Property4_frequency}
Suppose $(\Phi,\xi,\omega)$ is a weak solution of \eqref{eq:SloshingTension}. We have the following expression for $\omega$:
\begin{equation}\label{eq:formsum}
\omega = \dfrac{\int_\D|\nabla\Phi|^2\, dV + \int_\F \left(\xi^2 + \frac{1}{\bond}|\nabla_\F\xi|^2\right)\, dA}{2\int_\F \Phi\xi\, dA} = \frac{D[\Phi] + S[\xi]}{\int_\F \Phi\xi\, dA} .
\end{equation}
In particular, 
\begin{equation}\label{eq:energy_equidist}
D[\Phi] = S[\xi] = \frac{\omega}{2}\langle\Phi,\xi\rangle_{L^2(\F)}.
\end{equation}
\end{lemma}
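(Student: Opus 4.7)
The plan is to derive \eqref{eq:energy_equidist} first by two independent energy-type computations on $\D$ and on $\F$, and then obtain \eqref{eq:formsum} simply by summing them (noting that $\int_\F \Phi\xi\,dA$ is nonzero for nontrivial solutions, which is implicit in the statement since we need to divide by it).

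First I would compute $D[\Phi]$ by multiplying the Laplace equation \eqref{eq:SloshingTension1} by $\Phi$ and integrating over $\D$. Green's first identity gives
\[
0 = \int_\D \Phi\,\Delta\Phi\,dV = -\int_\D |\nabla\Phi|^2\,dV + \int_{\partial\D} \Phi\,\partial_\n\Phi\,dA.
\]
The Neumann condition \eqref{eq:SloshingTension2} kills the contribution on $\B$, and on $\F$ the outward normal is $\hat z$, so by \eqref{eq:SloshingTension3} the remaining boundary term is $\int_\F \Phi\,\Phi_z\,dA = \omega\int_\F \Phi\,\xi\,dA$. Rearranging yields $2D[\Phi] = \omega \langle\Phi,\xi\rangle_{L^2(\F)}$.

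Next I would compute $S[\xi]$ by multiplying \eqref{eq:SloshingTension4} by $\xi$ and integrating over $\F$:
\[
\int_\F \xi^2\,dA - \frac{1}{\bond}\int_\F \xi\,\Delta_\F\xi\,dA = \omega\int_\F \Phi\,\xi\,dA.
\]
Applying Green's identity on the surface $\F$ together with the contact line condition \eqref{eq:SloshingTension5} produces $\int_\F \xi\,\Delta_\F\xi\,dA = -\int_\F |\nabla_\F\xi|^2\,dA$, so the left side becomes $2S[\xi]$ and we obtain $2S[\xi] = \omega\langle\Phi,\xi\rangle_{L^2(\F)}$. Combining the two identities gives \eqref{eq:energy_equidist}, and summing the two balances gives the numerator of \eqref{eq:formsum}, which after dividing by $\int_\F \Phi\xi\,dA$ yields the desired Rayleigh-type formula.

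There is no real obstacle here; the only subtlety is that the computation must be interpreted in the weak sense (one would use the variational formulation with $\Phi$ and $\xi$ as their own test functions), and one should note that the quotient in \eqref{eq:formsum} is well-defined precisely because \eqref{eq:energy_equidist} forces $\langle\Phi,\xi\rangle_{L^2(\F)}\neq 0$ whenever $\omega\neq 0$ and $(\Phi,\xi)$ is nontrivial (since $D[\Phi]=S[\xi]=0$ together with $\xi$ having zero mean on $\F$ by Theorem~\ref{thm:Property1}(a) would force $\xi\equiv 0$ and hence, via \eqref{eq:SloshingTension3}, $\Phi$ constant, contradicting nontriviality).
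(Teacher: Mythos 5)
Your proposal is correct and follows essentially the same route as the paper: integrate \eqref{eq:SloshingTension1} against $\Phi$ over $\D$ and use \eqref{eq:SloshingTension2}, \eqref{eq:SloshingTension3} to get $2D[\Phi]=\omega\langle\Phi,\xi\rangle_{L^2(\F)}$, then integrate \eqref{eq:SloshingTension4} against $\xi$ over $\F$ and use \eqref{eq:SloshingTension5} to get $2S[\xi]=\omega\langle\Phi,\xi\rangle_{L^2(\F)}$, and sum. (Minor aside: $S[\xi]=0$ already forces $\xi\equiv 0$ directly from the definition of $S$, so the zero-mean condition is not needed in your final remark.)
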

\begin{proof}
First, integrating both \eqref{eq:SloshingTension2}, \eqref{eq:SloshingTension5} against $\Phi,\xi$ over $\B, \partial\F$, respectively, gives
\[ \int_\B \partial_\n\Phi\Phi\, dA = 0\qquad\textrm{ and }\qquad\int_{\partial\F} \partial_\n\xi\xi\, ds = 0. \]
Next, integrating \eqref{eq:SloshingTension1} against $\Phi$ over $\D$ and applying divergence theorem gives
\[ 0 = \int_\D (\Delta\Phi)\Phi\, dV = \int_{\B\cup\F} \partial_\n\Phi\Phi\, dA - \int_\D|\nabla\Phi|^2\, dV = \int_\F\Phi_z\Phi\, dA - \int_\D|\nabla\Phi|^2\, dV. \]
Integrating \eqref{eq:SloshingTension3} against $\Phi$ over $\F$ and using the equation above gives:
\begin{equation}
\label{eq:P4freq1} \omega\int_\F\Phi\xi\, dA = \int_\F\Phi_z\Phi\, dA = \int_\D|\nabla\Phi|^2\, dV.
\end{equation}
Next, integrating  \eqref{eq:SloshingTension4} against $\xi$ over $\F$ and applying the divergence theorem gives
\begin{equation}
\label{eq:P4freq2} \omega\int_\F\Phi\xi\, dA = \int_\F\xi^2\, dA - \dfrac{1}{\bond}\int_\F (\Delta_\F\xi)\xi\, dA = \int_\F\xi^2\, dA + \dfrac{1}{\bond}\int_\F|\nabla_\F\xi|^2\, dA.
\end{equation}
The result follows from summing \eqref{eq:P4freq1}, \eqref{eq:P4freq2} and rearranging terms.
\end{proof}

\subsection{Direct Method from the Calculus of Variations.} \label{sec:Prelim_CoV}
This subsection establishes results for the functional in \eqref{eq:VFsum_firstmode} so that we may apply the direct method from the calculus of variations \cite{Dacorogna:2007aa, Evans:2010aa} to prove Theorem \ref{thm:VFsum_firstmode}. We begin by reminding the reader that $\D\subset\R^3$ is assumed to be a bounded Lipschitz domain, and the Sobolev space $H^1(\D)$ admits a natural inner product, given by
$ \langle v,w\rangle_{H^1(\D)} = \langle v,w\rangle_{L^2(\D)} + \langle\nabla v, \nabla w\rangle_{L^2(\D)}$  for any $v, w\in H^1(\D)$
with  induced norm
$ \|v\|_{H^1(\D)}^2 = \|v\|_{L^2(\D)}^2 + \|\nabla v\|_{L^2(\D)}^2$. 
 For any $\Phi\in H^1(\D)$ and $\xi\in H^1(\F)$, we denote by $[\Phi]_\F, [\xi]_\F$ the average value (mean) of $\Phi,\xi$ over $\F$, respectively. That is,
\[ [\Phi]_\F = \frac{1}{|F|}\int_\F \Phi\, dA  \quad  \textrm{and} \quad [\xi]_\F = \frac{1}{|F|}\int_\F \xi\, dA, \]
where $|F|$ denotes the two-dimensional Lebesgue measure of $\F$; here $[\Phi]_\F$ is understood in the sense of trace \cite[Chapter 5.5]{Evans:2010aa}. The first result shows that the space of functions in Theorem \ref{thm:VFsum_firstmode} is a Hilbert space.

\begin{lemma}\label{thm:CoV_DirectSum}
The space of functions
\[ \H = \left\{(\Phi,\xi)\in H^1(\D)\times H^1(\F)\colon \int_\F \Phi\, dA = 0 = \int_\F \xi\, dA\right\} \]
is a Hilbert space with its induced norm
$ \|(\Phi,\xi)\|_\H^2 = \|\Phi\|_{H^1(\D)}^2 + \|\xi\|_{H^1(\F)}^2. $
\end{lemma}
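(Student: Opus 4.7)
The plan is to exhibit $\H$ as a closed linear subspace of the ambient Hilbert space $H^1(\D)\times H^1(\F)$, equipped with the product inner product
\[
\bigl\langle (\Phi_1,\xi_1),(\Phi_2,\xi_2)\bigr\rangle = \langle \Phi_1,\Phi_2\rangle_{H^1(\D)} + \langle \xi_1,\xi_2\rangle_{H^1(\F)},
\]
whose induced norm is exactly the one appearing in the statement. Since a closed subspace of a Hilbert space is itself a Hilbert space under the restricted inner product, it suffices to verify closedness.

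First I would recall that $H^1(\D)\times H^1(\F)$ is well known to be a Hilbert space (direct sum of Hilbert spaces). Then I would define the two linear functionals
\[
L_1(\Phi,\xi) = \int_\F \Phi\, dA, \qquad L_2(\Phi,\xi) = \int_\F \xi\, dA,
\]
so that $\H = \ker L_1 \cap \ker L_2$. A finite intersection of kernels of continuous linear functionals is closed, so the whole argument reduces to checking that $L_1$ and $L_2$ are bounded on $H^1(\D)\times H^1(\F)$.

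Boundedness of $L_2$ is immediate from Cauchy--Schwarz applied on $\F$: $|L_2(\Phi,\xi)|\le |F|^{1/2}\|\xi\|_{L^2(\F)} \le |F|^{1/2}\|\xi\|_{H^1(\F)}$. Boundedness of $L_1$ is the only place where a nontrivial ingredient enters, namely the trace theorem. Because $\D$ is a bounded Lipschitz domain, the trace map $\mathrm{tr}\colon H^1(\D)\to L^2(\partial\D)$ is continuous, hence so is its restriction to $\F\subset\partial\D$. Thus there exists $C=C(\D,\F)>0$ such that
\[
|L_1(\Phi,\xi)| \le |F|^{1/2}\|\Phi\|_{L^2(\F)} \le C|F|^{1/2}\|\Phi\|_{H^1(\D)} \le C|F|^{1/2}\|(\Phi,\xi)\|_\H.
\]
With $L_1,L_2$ continuous, $\H$ is closed, completeness is inherited from $H^1(\D)\times H^1(\F)$, and the induced norm is a genuine Hilbert norm on $\H$.

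I do not expect any serious obstacle: the only substantive ingredient is the continuity of the trace operator on a Lipschitz domain, which is a standard result and is justified by the standing assumption on $\D$ in Section~\ref{sec:Setup}. Everything else is linear algebra on Hilbert spaces together with a one-line Cauchy--Schwarz estimate.
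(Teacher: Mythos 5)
Your proof is correct and follows essentially the same route as the paper's: both exhibit $\H$ as a closed subspace of the product Hilbert space $H^1(\D)\times H^1(\F)$, with closedness resting on the continuity of the trace operator (for the $\Phi$-constraint) and Cauchy--Schwarz (for the $\xi$-constraint). Phrasing the constraint sets as kernels of bounded linear functionals rather than arguing sequentially, as the paper does, is only a cosmetic difference.
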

\begin{proof}
Define the following function spaces: 
\begin{align*}
X_\D & = \left\{\Phi\in H^1(\D)\colon \int_\F \Phi\, dA=0\right\} 
\quad \textrm{and} \quad 
X_\F  = \left\{\xi\in H^1(\F)\colon \int_\F \xi\, dA = 0\right\}.
\end{align*}
We first show that $X_\D, X_\F$ are closed subspaces of $H^1(\D), H^1(\F)$, respectively. It is clear that both $X_\D, X_\F$ are subspaces. Consider any $\Phi\in\bar X_\D$, the closure of $X_\D$. There exists a sequence $(\Phi_j)\in X_\D$ such that $\Phi_j\longrightarrow\Phi$ in $H^1(\D)$. Using the continuity of the trace operator $\Gamma_\D\colon H^1(\D)\longrightarrow L^2(\partial\D)$ \cite{Evans:2010aa},
\begin{align*}
\left|\int_\F\Phi\, dA - \int_\F\Phi_j\, dA\right|& \le \int_\F |\Phi-\Phi_j|\, dA  \le |F|^{1/2}\|\Phi-\Phi_j\|_{L^2(\F)}\\
& \le |F|^{1/2}\|\Phi-\Phi_j\|_{L^2(\partial\D)} = |F|^{1/2}\|\Gamma_\D(\Phi-\Phi_j)\|_{L^2(\partial\D)}\\
& \le C_\Gamma |F|^{1/2}\|\Phi-\Phi_j\|_{H^1(\D)}\longrightarrow 0\textrm{ as }j\longrightarrow\infty.
\end{align*}
This shows that $X_\D$ is closed in $H^1(\D)$ since
\[ 0=\int_\F\Phi_j\, dA \longrightarrow \int_\F\Phi\, dA. \]
A similar argument using only the Cauchy-Schwarz inequality shows that $X_\F$ is closed in $H^1(\F)$. Finally, since the closed subspace of a Hilbert space is also a Hilbert space, the direct sum of $X_\D$ and $X_\F$, which is $\H$, is a Hilbert space, with its inner product defined by
$ \langle v_1,v_2\rangle_\H = \langle\Phi_1,\Phi_2\rangle_{X_\D} + \langle\xi_1,\xi_2\rangle_{X_\F}$
with $v_1=(\Phi_1,\xi_1), v_2=(\Phi_2,\xi_2)\in\H$.
\end{proof}

To apply the direct method, one needs to verify that $D[\Phi]+S[\xi]$ satisfy coercivity and sequentially weakly lower-semicontinuity over $\H$; the latter follows since both $D[\Phi], S[\xi]$ possess some convexity property, which we will make precise in Lemma \ref{thm:CoV_Coercivity}. Coercivity means that $D[\Phi]+S[\xi]$ admits some lower growth condition in terms of $\|(\Phi,\xi)\|_{\H}^2$. The structure of $\H$ clearly suggests inequality of the Wirtinger type to estimate $D[\Phi]+S[\xi]$. As $\Phi$ has zero mean over $\F\subset\partial D$ instead of $\D$, a variant of the classical Poincar\'e-Wirtinger inequality, stated below, is applicable in showing coercivity, as we shall see in Lemma \ref{thm:CoV_Coercivity}.

\begin{theorem}[{\cite[Example 3.6]{Alessandrini:2008aa}}]\label{thm:CoV_Poincare_Wirtinger}
Consider a bounded Lipschitz domain $\Omega\subset\R^n, n\ge 1$. Let $\Gamma_\Omega\colon H^1(\Omega)\longrightarrow L^2(\partial\Omega)$ be the trace operator. For any open portion $\Sigma\subset\partial\Omega$, the following inequality holds for any $v\in H^1(\Omega)$:
\[ \|v - [v]_\Sigma\|_{H^1(\Omega)}\le \left(1+C_{\Gamma_\Omega}\left(\dfrac{|\Omega|}{|\Sigma|}\right)^{1/2}\right)\left(\sqrt{1+C_p^2}\right)\|\nabla u\|_{L^2(\Omega)}, \]
where $|\Omega|$ and $|\Sigma|$ are the $n$ and $(n-1)$ Lebesgue measure of $\Omega$ and $\Sigma$, respectively, and $C_{\Gamma_\Omega}, C_p$ positive constants that depends only on $\Omega$.
\end{theorem}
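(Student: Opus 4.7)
The plan is to reduce this trace-type Poincaré–Wirtinger inequality for a boundary portion $\Sigma$ to the standard (interior-mean) Poincaré–Wirtinger inequality, plus a single application of the trace inequality used to compare the two averages. Observe first that $[v]_\Sigma$ is a constant, so $\nabla(v-[v]_\Sigma)=\nabla v$ and
\[
\|v-[v]_\Sigma\|_{H^1(\Omega)}^2 = \|v-[v]_\Sigma\|_{L^2(\Omega)}^2 + \|\nabla v\|_{L^2(\Omega)}^2.
\]
Thus the entire task is to bound $\|v-[v]_\Sigma\|_{L^2(\Omega)}$ by a multiple of $\|\nabla v\|_{L^2(\Omega)}$ with an explicit constant.

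To do this, I would introduce the auxiliary function $w = v-[v]_\Omega$, which has zero mean on $\Omega$. The classical Poincaré–Wirtinger inequality on the bounded Lipschitz domain $\Omega$ furnishes a constant $C_p=C_p(\Omega)$ with $\|w\|_{L^2(\Omega)}\le C_p\|\nabla v\|_{L^2(\Omega)}$, and hence
\[
\|w\|_{H^1(\Omega)} \le \sqrt{1+C_p^2}\,\|\nabla v\|_{L^2(\Omega)}.
\]
Next, since $[v]_\Omega-[v]_\Sigma = -[w]_\Sigma$, apply Cauchy–Schwarz on $\Sigma$ followed by the continuity of the trace operator $\Gamma_\Omega\colon H^1(\Omega)\to L^2(\partial\Omega)$ (with constant $C_{\Gamma_\Omega}$):
\[
|[v]_\Omega-[v]_\Sigma| = |[w]_\Sigma| \le |\Sigma|^{-1/2}\|w\|_{L^2(\Sigma)} \le C_{\Gamma_\Omega}|\Sigma|^{-1/2}\|w\|_{H^1(\Omega)} \le C_{\Gamma_\Omega}|\Sigma|^{-1/2}\sqrt{1+C_p^2}\,\|\nabla v\|_{L^2(\Omega)}.
\]
Splitting $v-[v]_\Sigma = w + ([v]_\Omega-[v]_\Sigma)$ and using the triangle inequality in $L^2(\Omega)$, together with $\|[v]_\Omega-[v]_\Sigma\|_{L^2(\Omega)}=|\Omega|^{1/2}|[v]_\Omega-[v]_\Sigma|$, yields
\[
\|v-[v]_\Sigma\|_{L^2(\Omega)} \le \Bigl(C_p + C_{\Gamma_\Omega}(|\Omega|/|\Sigma|)^{1/2}\sqrt{1+C_p^2}\Bigr)\|\nabla v\|_{L^2(\Omega)}.
\]

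The final step is to insert this back into the identity for $\|v-[v]_\Sigma\|_{H^1(\Omega)}^2$ and extract the stated constant. Setting $\alpha = C_{\Gamma_\Omega}(|\Omega|/|\Sigma|)^{1/2}$, one obtains
\[
\|v-[v]_\Sigma\|_{H^1(\Omega)}^2 \le \Bigl[\bigl(C_p + \alpha\sqrt{1+C_p^2}\bigr)^2 + 1\Bigr]\|\nabla v\|_{L^2(\Omega)}^2,
\]
and the elementary inequality $\sqrt{1+C_p^2}\ge C_p$ implies
\[
\bigl(C_p + \alpha\sqrt{1+C_p^2}\bigr)^2 + 1 \le (1+\alpha)^2(1+C_p^2),
\]
which gives exactly the claimed estimate after taking square roots. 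The main obstacle is purely bookkeeping: one must route every term through the single gradient norm $\|\nabla v\|_{L^2(\Omega)}$ and combine the $L^2$-Poincaré constant $C_p$ with the trace constant $C_{\Gamma_\Omega}$ in the precise multiplicative form stated; conceptually the argument uses only the two standard ingredients (mean-zero Poincaré and continuity of the trace) available on any bounded Lipschitz domain.
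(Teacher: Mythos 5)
Your argument is correct and complete. The paper itself offers no proof of this statement --- it is imported verbatim from Alessandrini et al.\ (cited as Example 3.6 there) --- so there is nothing internal to compare against; your derivation is the natural self-contained one. Each step checks out: the mean-zero Poincar\'e--Wirtinger inequality gives $\|w\|_{H^1(\Omega)}\le\sqrt{1+C_p^2}\,\|\nabla v\|_{L^2(\Omega)}$ for $w=v-[v]_\Omega$; the identity $[v]_\Omega-[v]_\Sigma=-[w]_\Sigma$ together with Cauchy--Schwarz on $\Sigma$ and the trace bound controls the shift of averages; and the final algebraic inequality $\bigl(C_p+\alpha\sqrt{1+C_p^2}\bigr)^2+1\le(1+\alpha)^2(1+C_p^2)$ is verified by expanding both sides with $s=\sqrt{1+C_p^2}$, the difference being $2\alpha s(s-C_p)\ge 0$. (Note the $\|\nabla u\|_{L^2(\Omega)}$ in the stated inequality is a typo for $\|\nabla v\|_{L^2(\Omega)}$, which your proof correctly treats.)
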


\begin{lemma}\label{thm:CoV_Coercivity}
The integral functional $\FF(v)=D[\Phi] + S[\xi]$  is weakly lower semicontinuous in $\H$ and satisfies the coercivity condition
\[ \FF(v)\ge C\|v\|_\H^2 \qquad \textrm{ for all  }  \ v=(\Phi,\xi)\in\H  \]
for some constant $C=C(\bond,\D,\F)>0$.
\end{lemma}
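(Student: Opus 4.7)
The plan is to treat the two assertions separately and to exploit the fact that $\FF = D+S$ splits as the sum of a functional depending only on the $H^1(\D)$ component and one depending only on the $H^1(\F)$ component. Since weak convergence in the direct-sum space $\H$ is equivalent to weak convergence in each factor, weak lower semicontinuity and coercivity can both be proved factor by factor and then recombined.

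For weak lower semicontinuity, I would observe that $2D[\Phi] = \|\nabla\Phi\|_{L^2(\D)}^2$ is the square of a continuous seminorm on $H^1(\D)$, and $2S[\xi] = \|\xi\|_{L^2(\F)}^2 + \bond^{-1}\|\nabla_\F\xi\|_{L^2(\F)}^2$ is an equivalent squared $H^1(\F)$ norm. Both are convex and strongly continuous on their respective Sobolev spaces, so by the classical consequence of Mazur's lemma that convex, strongly continuous functionals on a Banach space are sequentially weakly lower semicontinuous, each piece is weakly lsc. Passing $(\Phi_n,\xi_n)\rightharpoonup(\Phi,\xi)$ in $\H$ to the limit piecewise and adding then gives $\liminf_n \FF(v_n)\geq \FF(v)$.

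For coercivity, the $\xi$-contribution is immediate: comparing the coefficients $1$ and $\bond^{-1}$ of the $L^2$ and gradient terms gives the elementary bound $2S[\xi]\geq (\max(1,\bond))^{-1}\|\xi\|_{H^1(\F)}^2$. The $\Phi$-contribution is the real work and is the one step where the constraint $\int_\F \Phi\,dA=0$ built into $\H$ plays a role, since $D[\Phi]$ only controls $\|\nabla\Phi\|_{L^2(\D)}$. To upgrade this to a full $H^1(\D)$ bound I would invoke Theorem \ref{thm:CoV_Poincare_Wirtinger} with $\Omega=\D$, $\Sigma=\F$, $v=\Phi$; since $[\Phi]_\F = 0$, it yields $\|\Phi\|_{H^1(\D)}\leq C_{PW}(\D,\F)\,\|\nabla\Phi\|_{L^2(\D)}$, and hence $2D[\Phi]\geq C_{PW}^{-2}\|\Phi\|_{H^1(\D)}^2$.

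Summing the two inequalities produces the desired estimate $\FF(v)\geq C\|v\|_\H^2$ with $C = \tfrac{1}{2}\min\bigl(C_{PW}^{-2},(\max(1,\bond))^{-1}\bigr)$, which depends only on $\bond$, $\D$, and $\F$. The only nontrivial ingredient — and thus the main obstacle — is identifying the correct Poincar\'e-type inequality: because the zero-mean constraint in $\H$ is taken over the boundary piece $\F$ rather than over the bulk domain $\D$, the standard Poincar\'e-Wirtinger inequality does not apply directly, and the boundary-mean version of Alessandrini cited as Theorem \ref{thm:CoV_Poincare_Wirtinger} is needed. Once that inequality is invoked, the rest is bookkeeping.
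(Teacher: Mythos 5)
Your proposal is correct and follows essentially the same route as the paper: weak lower semicontinuity from convexity of each piece (the paper cites the convex-integrand criterion where you invoke Mazur's lemma, but these are the same standard fact), and coercivity from the boundary-mean Poincar\'e--Wirtinger inequality of Theorem \ref{thm:CoV_Poincare_Wirtinger} applied to $\Phi$ together with the elementary bound $2S[\xi]\ge\min\{1,\bond^{-1}\}\|\xi\|_{H^1(\F)}^2$. Your identification of the Alessandrini-type inequality as the one genuinely nontrivial ingredient matches the paper's emphasis exactly.
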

\begin{proof}
Observe that integrands of both $D[\Phi]$ and $S[\xi]$ are convex with respect to $\nabla\Phi$ and $\nabla_{\F}\xi$, respectively. It follows that they are weakly lower-semicontinuous in $H^1(\D), H^1(\F)$, respectively \cite[Theorem 2.12]{Rindler:2015aa}. Thus, for any $(v_j)=(\Phi_j,\xi_j)\rightharpoonup (\Phi,\xi) = v$ in $H^1(\D)\times H^1(\F)$ we have
\[ \FF(v) = D[\Phi] + S[\xi] \le \liminf_{j\in\N} D[\Phi_j] + \liminf_{j\in\N} S[\xi_j] \le \liminf_{j\in\N} \Big[D[\Phi_j] + S[\xi_j]\Big] = \liminf_{j\in\N} \FF(v_j). \]
The result follows since $\H$ is a subspace of $H^1(\D)\times H^1(\F)$.

Since $[\Phi]_\F=0$, Theorem \ref{thm:CoV_Poincare_Wirtinger} yields
\[ \|\Phi\|_{H^1(\D)}^2\le C(\D,\F)\|\nabla\Phi\|_{L^2(\D)}^2 =  2C(\D,\F)D[\Phi]. \]
On the other hand,
\[ 2S[\xi] = \int_{\F} \left(\xi^2 + \frac{1}{\bond}|\nabla_\F\xi|^2\right)\, dA \ge \min\left\{1,\frac{1}{\bond}\right\}\|\xi\|_{H^1(\F)}^2. \]
It follows that
\begin{align*}
\FF(v)  \ge \dfrac{1}{2C(\D,\F)}\|\Phi\|_{H^1(\D)}^2 + \min\left\{\frac{1}{2},\frac{1}{2\bond}\right\}\|\xi\|_{H^1(\F)}^2  
\geq C(\bond, \D, \F)\|v\|_\H^2.
\end{align*}
\end{proof}

Having established coercivity and sequentially weakly lower-semicontinuity, we prove the final  ingredient, which essentially says that the minimizing sequence will ``preserve" the integral constraint in the variational problem \eqref{eq:VFsum_firstmode}. The main tool in the proof below is the compactness of the trace operator $\Gamma_\D\colon H^1(\D)\longrightarrow L^2(\partial\D)$ \cite[pp 103]{Necas:2011aa}.

\begin{lemma}\label{thm:CoV_WC}
The function $(\Phi,\xi)\mapsto\displaystyle\int_\F \Phi\xi\, dA$ is weakly continuous in $H^1(\D)\times H^1(\F)$. 
\end{lemma}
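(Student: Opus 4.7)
The plan is to show that if $(\Phi_j,\xi_j)\rightharpoonup (\Phi,\xi)$ weakly in $H^1(\D)\times H^1(\F)$, then $\int_\F \Phi_j\xi_j\,dA \to \int_\F \Phi\xi\,dA$. I would start with the standard add-and-subtract trick:
\[
\int_\F \Phi_j\xi_j\,dA - \int_\F \Phi\xi\,dA = \int_\F (\Phi_j-\Phi)\xi_j\,dA + \int_\F \Phi(\xi_j-\xi)\,dA,
\]
and then show each term tends to $0$.

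For the first term, I would bound it by Cauchy--Schwarz: $\|\Phi_j-\Phi\|_{L^2(\F)}\|\xi_j\|_{L^2(\F)}$. The weakly convergent sequence $(\xi_j)$ is bounded in $H^1(\F)$, hence in $L^2(\F)$. The key ingredient for driving $\|\Phi_j-\Phi\|_{L^2(\F)}\to 0$ is the compactness of the trace operator $\Gamma_\D\colon H^1(\D)\to L^2(\partial\D)$, as cited in the paragraph preceding the lemma; since $\Phi_j\rightharpoonup \Phi$ weakly in $H^1(\D)$ and $\Gamma_\D$ is a compact linear operator, $\Gamma_\D \Phi_j\to \Gamma_\D \Phi$ strongly in $L^2(\partial\D)$, and restricting to $\F\subset\partial\D$ gives the desired convergence in $L^2(\F)$. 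Multiplied by the bounded $L^2(\F)$-norm of $\xi_j$, this term vanishes.

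For the second term, I have two equally short options. Either invoke the Rellich--Kondrachov compact embedding $H^1(\F)\hookrightarrow\hookrightarrow L^2(\F)$ (valid since $\F$ is a bounded Lipschitz domain in $\R^2$) to conclude $\xi_j\to\xi$ strongly in $L^2(\F)$, and then use Cauchy--Schwarz with the fact that $\Phi\in L^2(\F)$ by trace; or simply note that weak $H^1(\F)$-convergence implies weak $L^2(\F)$-convergence, and evaluate the weak limit against the fixed test function $\Phi\in L^2(\F)$. Either way, this term tends to $0$, completing the proof.

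I do not expect any real obstacle here: the only nontrivial input is trace compactness, which the paper has already flagged; the rest is a routine bilinear weak-continuity argument of the type used throughout the direct-method machinery. The one subtle point worth stating carefully is that $\Phi\in L^2(\F)$ makes sense as the trace of an $H^1(\D)$ function on the Lipschitz portion $\F\subset \partial\D$, so that the limit integral $\int_\F \Phi\xi\,dA$ is well-defined in the first place.
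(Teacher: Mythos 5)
Your proposal is correct and follows essentially the same route as the paper: the same add-and-subtract decomposition, Cauchy--Schwarz on each term, compactness of the trace operator for the $\Phi$-factor, and Rellich--Kondrachov (or, in your cleaner alternative, just weak $L^2$ convergence against the fixed trace $\Phi$) for the $\xi$-factor. The paper additionally runs the argument through a subsequence-of-subsequences framework, which your direct version renders unnecessary, but the substance is identical.
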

\begin{proof}
Consider any (non-renumbered) subsequence of a weakly convergent sequence $v_j=(\Phi_j,\xi_j)\rightharpoonup (\Phi,\xi)=v$ in $H^1(\D)\times H^1(\F)$. Equivalently, $(\Phi_j)\rightharpoonup\Phi$ in $H^1(\D)$ and $(\xi_j)\rightharpoonup\xi$ in $H^1(\F)$. First, the Rellich-Kondrachov theorem implies that there exists a subsubsequence $(\xi_{j_k})\in H^1(\F)$ such that $\xi_{j_k}\longrightarrow\xi$ strongly in $L^2(\F)$. Recall that, since $\Gamma_\D$  is a compact linear operator, it maps weakly convergent sequences into strongly convergent sequences. Thus, $\Gamma_\D(\Phi_j)\longrightarrow\Gamma_\D(\Phi)$ strongly in $L^2(\partial\D)$. For this subsubsequence $(\Phi_{j_k},\xi_{j_k})$, the Cauchy-Schwarz inequality gives
\begin{align*}
\left|\int_\F\Phi_{j_k}\xi_{j_k}\, dA - \int_\F\Phi\xi\, dA\right| & \le \int_\F|\Phi_{j_k}-\Phi||\xi_{j_k}|\, dA + \int_\F|\Phi||\xi_{j_k}-\xi|\, dA\\
& \le \|\Phi_{j_k}-\Phi\|_{L^2(\F)}\|\xi_{j_k}\|_{L^2(\F)} + \|\Phi\|_{L^2(\F)}\|\xi_{j_k}-\xi\|_{L^2(\F)}\\
& \le \|\Gamma_\D(\Phi_{j_k})-\Gamma_\D(\Phi)\|_{L^2(\partial\D)}  \| \xi_{j_k}\|_{L^2(\F)} 
+ \|\Phi\|_{L^2(\F)} \|\xi_{j_k}-\xi\|_{L^2(\F)} \\ 
& \longrightarrow 0\textrm{ as }k\longrightarrow\infty,
\end{align*}
where we  used the fact that $\|\xi_{j_k}\|_{L^2(\F)}$ is bounded since $(\xi_{j_k})$ is a convergent sequence in $L^2(\F)$. This shows that
\[ \int_\F\Phi_{j_k}\xi_{j_k}\, dA\longrightarrow \int_\F\Phi\xi\, dA\quad\textrm{ as }k\longrightarrow\infty. \]
Since this is true for any subsequence of $(\Phi_j,\xi_j)$, the result follows. 
\end{proof}


\section{Proof of Theorem \ref{thm:VFsum_firstmode}, a Rayleigh-Ritz generalization, and other results} \label{sec:Proof}
We are now ready to prove Theorem \ref{thm:VFsum_firstmode}. An immediate consequence is the domain monotonicity property for the fundamental eigenvalue of \eqref{eq:SloshingTension}. We also prove a variational characterization of the higher eigenvalues of \eqref{eq:SloshingTension}.

\begin{proof}[Proof of Theorem \ref{thm:VFsum_firstmode}]
We begin by establishing the existence of a minimizer of \eqref{eq:VFsum_firstmode}, using the direct method from the calculus of variations; see  \cite[Theorem 2.36]{Rindler:2015aa}. Let $M=\{(\Phi,\xi)\in\H\colon\langle\Phi,\xi\rangle_{L^2(\F)}=1\}$. Choose a minimizing sequence $v_j=(\Phi_j,\xi_j)\in M$ such that
\[ D[\Phi_j]+S[\xi_j]\longrightarrow\inf_{(\Phi,\xi)\in M} \Big(D[\Phi]+S[\xi]\Big) = \omega_1. \]
Since bounded sets in reflexive Banach spaces are sequentially weakly relatively compact, Lemmas \ref{thm:CoV_DirectSum} and \ref{thm:CoV_Coercivity} imply  the existence of a weakly convergent subsequence $v_{j_k}\rightharpoonup v_1=(\Phi_1,\xi_1)$ in $\H$. Lemma \ref{thm:CoV_WC} asserts that $v_1$ satisfies the constraint $\langle\Phi^*,\xi^*\rangle_{L^2(\F)}=1$ so that $v_1\in M$, while Lemma \ref{thm:CoV_Coercivity} gives
\[ \omega_1\le D[\Phi_1] + S[\xi_1]\le\liminf_{j\in\N} \Big(D[\Phi_{j_k}] + S[\xi_{j_k}]\Big) = \omega_1. \]
Hence, $D[\Phi_1]+S[\xi_1]=\omega_1$ and $(\Phi_1,\xi_1)$ is a minimizer of \eqref{eq:VFsum_firstmode}. 

Let $(\Phi_1,\xi_1)$ be a minimizer to the problem \eqref{eq:VFsum_firstmode}. \emph{The method of Lagrange multipliers} leads us to consider the functional $J(\Phi,\xi)$ defined by
\[ \int_\D |\nabla\Phi|^2\, dV + \int_\F \Big(\xi^2+\dfrac{1}{\bond}|\nabla_\F\xi|^2\Big)\, dA - \lambda_1\int_\F\Phi\xi\, dA - \gamma_1\int_\F\Phi\, dA - \gamma_2\int_\F\xi\, dA \]
with Lagrange multipliers $\lambda_1, \gamma_1, \gamma_2\in\R$. 
For  a minimizer $(\Phi_1,\xi_1)$, the first variation of $J(\Phi,\xi)$ in the direction of $(f,g)\in H^1(\D)\times H^1(\F)$ must be zero. 
A direct computation gives the Euler-Lagrange equations
\begin{subequations}
\begin{alignat}{3}
\label{eq:EL1} \Delta\Phi_1 & = 0 && \qquad\textrm{ in }\D, \\
\label{eq:EL2} \partial_\n\Phi_1 & = 0 && \qquad\textrm{ on }\B, \\
\label{eq:EL3} (\Phi_1)_z & = \lambda_1\xi_1 + \gamma_1 && \qquad\textrm{ on }\F, \\
\label{eq:EL4} \xi_1 - \dfrac{1}{\bond}\Delta_\F\xi_1 & = \lambda_1\Phi_1 + \gamma_2 && \qquad\textrm{ on }\F, \\
\label{eq:EL5} \partial_\n\xi_1 & = 0 && \qquad\textrm{ on }\partial\F.
\end{alignat}
\end{subequations}
Note that integrating \eqref{eq:EL3}, \eqref{eq:EL4} over $\F$, using $\int_\F\Phi_1\, dA=0=\int_\F\xi_1\, dA$ and the divergence theorem gives:
\begin{align*}
\gamma_1\int_\F\, dA & = \int_\F (\Phi_1)_z\, dA - \lambda_1\int_\F\xi_1\, dA = \int_\D\Delta\Phi_1\, dV - \int_\B\partial_\n\Phi_1\, dA = 0, \\
\gamma_2\int_\F\, dA & = \int_\F\xi_1\, dA - \dfrac{1}{\bond}\int_\F\Delta_\F\xi_1\, dA - \lambda_1\int_\F\Phi_1\, dA = -\dfrac{1}{\bond}\int_{\partial\F}\partial_\n\xi_1\, ds = 0 . 
\end{align*}
Since $\displaystyle\int_\F\, dA\neq 0$, we must have $\gamma_1=\gamma_2=0$ and \eqref{eq:EL3}, \eqref{eq:EL4} reduce to
\begin{subequations}
\begin{alignat}{3}
(\Phi_1)_z & = \lambda_1\xi_1  && \quad\text{ on }\F, \label{eq:EL3a} \\
\xi_1 - \dfrac{1}{\bond}\Delta_\F\xi_1 & = \lambda_1\Phi_1 && \quad\text{ on }\F. \label{eq:EL4a}
\end{alignat}
\end{subequations}
Now, integrating \eqref{eq:EL3a}, \eqref{eq:EL4a} against $\Phi_1, \xi_1$, respectively, over $\F$ and using $\langle \Phi_1,\xi_1\rangle_{L^2(\F)}=1$ yields
\begin{align*}
\lambda_1 = \lambda_1\int_\F \Phi_1\xi_1\, dA & = \int_\F (\Phi_1)_z\Phi_1\, dA\\
& = \int_\D (\Delta\Phi_1)\Phi_1\, dV - \int_\B \partial_\n\Phi_1\Phi_1\, dA + \int_\D |\nabla\Phi_1|^2\, dV = \int_\D |\nabla\Phi_1|^2\, dV \\
\lambda_1 = \lambda_1\int_\F \Phi_1\xi_1\, dA & = \int_\F \xi_1^2\, dA -\dfrac{1}{\bond}\int_\F (\Delta_\F\xi_1)\xi_1\, dA\\
& = \int_\F \xi_1^2 dA - \dfrac{1}{\bond}\int_{\partial\F} \partial_\n\xi_1\, ds + \dfrac{1}{\bond}\int_\F |\nabla_\F\xi_1|^2\, dA = \int_\F \Big(\xi_1^2 + \dfrac{1}{\bond}|\nabla_\F\xi_1|^2\Big)\, dA.
\end{align*}
Finally, summing these two equations gives
\[ \lambda_1 = \dfrac{1}{2}\left\{\int_\D |\nabla\Phi_1|^2\, dV + \int_\F \Big(\xi_1^2 + \dfrac{1}{\bond}|\nabla_\F\xi_1|^2\Big)\, dA\right\} = D[\Phi_1] + S[\xi_1] = \omega_1. \]
\end{proof}

\begin{corollary}\label{thm:VFsum_equivalent}
The variational formulation \eqref{eq:VFsum_firstmode} is equivalent to 
\begin{equation}
\inf_{(\Phi,\xi)\in\H \setminus \{ 0 \}} \dfrac{D[\Phi] + S[\xi]}{ | \langle\Phi,\xi\rangle_{L^2(\F)} | }. \label{eq:VFsum_firstmode_equiv}
\end{equation}
\end{corollary}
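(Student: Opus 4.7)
The plan is to establish the two infima coincide by exploiting the 2-homogeneity of $D$, $S$, and $\langle\cdot,\cdot\rangle_{L^2(\F)}$ under joint rescaling of $(\Phi,\xi)$, together with the fact that the mean-zero condition defining $\H$ and the value $|\langle\Phi,\xi\rangle_{L^2(\F)}|$ are both preserved under scalar multiplication and under sign flips of either component individually. Let me denote the first infimum by $I_1$ and the second by $I_2$.

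For the inequality $I_2 \le I_1$, any admissible $(\Phi,\xi)$ for \eqref{eq:VFsum_firstmode} lies in $\H\setminus\{0\}$ (since it has $\langle\Phi,\xi\rangle_{L^2(\F)}=1\ne 0$) and the ratio in \eqref{eq:VFsum_firstmode_equiv} equals $D[\Phi]+S[\xi]$; taking the infimum over this strictly smaller family gives $I_2 \le I_1$. For the reverse inequality, I would take any $(\Phi,\xi)\in\H\setminus\{0\}$. If $\langle\Phi,\xi\rangle_{L^2(\F)}=0$, the Rayleigh-type quotient is $+\infty$ and such pairs never realize the infimum, so without loss of generality $c:=\langle\Phi,\xi\rangle_{L^2(\F)}\neq 0$. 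Replacing $\Phi$ by $-\Phi$ if $c<0$ (note $D[-\Phi]=D[\Phi]$ and $(-\Phi,\xi)\in\H$), we may assume $c>0$; now set $t=1/\sqrt{c}$ and $(\tilde\Phi,\tilde\xi):=(t\Phi,t\xi)\in\H$. Then $\langle\tilde\Phi,\tilde\xi\rangle_{L^2(\F)}=t^2 c=1$, so $(\tilde\Phi,\tilde\xi)$ is admissible for \eqref{eq:VFsum_firstmode}, while the quadratic scaling of the Dirichlet and surface energies yields
\[
D[\tilde\Phi]+S[\tilde\xi]=t^2\bigl(D[\Phi]+S[\xi]\bigr)=\dfrac{D[\Phi]+S[\xi]}{|\langle\Phi,\xi\rangle_{L^2(\F)}|}.
\]
Taking the infimum over $(\Phi,\xi)\in\H\setminus\{0\}$ on the right gives $I_1\le I_2$.

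There is no serious obstacle; the only points requiring mild care are verifying that the operations of scaling by $t>0$ and flipping the sign of one component keep the pair inside $\H$ (obvious, since both preserve vanishing mean on $\F$), and observing that pairs with $\langle\Phi,\xi\rangle_{L^2(\F)}=0$ may be safely excluded in \eqref{eq:VFsum_firstmode_equiv} because they contribute $+\infty$ to the quotient. The equivalence also makes clear that minimizers of \eqref{eq:VFsum_firstmode_equiv} are exactly the normalizations (up to sign) of minimizers of \eqref{eq:VFsum_firstmode}, which is consistent with Theorem \ref{thm:VFsum_firstmode}.
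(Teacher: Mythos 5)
Your proof is correct and follows essentially the same route as the paper's: both rescale $(\Phi,\xi)$ by $1/\sqrt{|\langle\Phi,\xi\rangle_{L^2(\F)}|}$ and use the quadratic homogeneity of $D$ and $S$. You are in fact slightly more careful than the paper, which dismisses the sign and the $\langle\Phi,\xi\rangle_{L^2(\F)}=0$ cases with a ``without loss of generality,'' whereas you justify both explicitly.
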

\begin{proof}
Write $a = \langle\Phi,\xi\rangle_{L^2(\F)}$, which, without loss of generality, we may assume to be positive. Set $(\tilde\Phi, \tilde\xi) = ( \Phi / \sqrt a, \xi / \sqrt a ) $, where $(\Phi,\xi)\neq (0,0)$. Then $\langle\tilde\Phi, \tilde\xi\rangle_{L^2(\F)} = 1$ and
\begin{align*} 
 \inf_{(\Phi,\xi)\in\H\setminus\{ 0\}} \dfrac{D[\Phi] + S[\xi]}{a} 
 = \inf_{\substack{(\tilde\Phi,\tilde\xi)\in\H\\ \langle\tilde\Phi,\tilde\xi\rangle_{L^2(\F)}=1}} D[\tilde\Phi] + S[\tilde\xi].
\end{align*}
\end{proof}

In the following theorem, we prove a  domain monotonicity result about the fundamental eigenvalue of \eqref{eq:SloshingTension}, stating that if two containers have an identical free surface and both container walls are vertical at the free surface, then the larger container has a higher fundamental sloshing frequency. A similar result for the mixed Steklov-Neumann problem is given in  \cite{Moiseev:1964aa}. 

\begin{theorem}\label{thm:domain}
Suppose we have two bounded Lipschitz domains $\D,\tilde\D$ such that $\tilde\D\subset\D$ and the container's wall over which the contact line moves is vertical for both $\D,\tilde\D$. Suppose $\partial\D=\F\cup\B, \partial\tilde\D=\F\cup\tilde\B$, and $\B, \tilde\B$ are such that $\B$ envelops $\tilde\B$. Then $\omega_1^{\tilde\D} \leq \omega_1^\D$, where $\omega_1(\cdot)$ is the first non-trivial (positive) eigenvalue of \eqref{eq:SloshingTension}.
\end{theorem}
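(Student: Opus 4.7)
The plan is to carry out a direct comparison of the two variational principles given by Theorem \ref{thm:VFsum_firstmode}, exploiting the fact that $\D$ and $\tilde\D$ share the same free surface $\F$. Since the free surface energy $S[\xi]$ is an integral over $\F$ alone, it is identical in both problems; only the bulk Dirichlet energy $D[\Phi]$ feels the change of domain, and it can only decrease when $\D$ is replaced by the smaller $\tilde\D$. This is the standard restriction-as-test-function strategy, adapted to the present coupled setting.

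First I would apply Theorem \ref{thm:VFsum_firstmode} to the larger domain $\D$ and fix a minimizing pair $(\Phi^\D,\xi^\D)$ in the admissible space $\H$ associated to $\D$, satisfying $\langle\Phi^\D,\xi^\D\rangle_{L^2(\F)}=1$ and
\begin{equation*}
\omega_1^\D=\tfrac{1}{2}\int_\D|\nabla\Phi^\D|^2\,dV+S[\xi^\D].
\end{equation*}
Next I would construct a test pair for the $\tilde\D$-problem by restriction, setting $\tilde\Phi:=\Phi^\D|_{\tilde\D}$ and $\tilde\xi:=\xi^\D$. Since $\tilde\D\subset\D$ is a bounded Lipschitz subdomain, $\tilde\Phi\in H^1(\tilde\D)$. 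Because $\F$ is a common portion of $\partial\D$ and $\partial\tilde\D$ and the trace operator is local along the boundary, the traces of $\tilde\Phi$ and $\Phi^\D$ on $\F$ coincide. Consequently the zero-mean conditions $\int_\F\tilde\Phi\,dA=0$ and $\int_\F\tilde\xi\,dA=0$ and the normalization $\langle\tilde\Phi,\tilde\xi\rangle_{L^2(\F)}=1$ are all inherited, so $(\tilde\Phi,\tilde\xi)$ is admissible for the variational problem on $\tilde\D$.

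The comparison is then immediate. Using $|\nabla\Phi^\D|^2\geq 0$ together with $\tilde\D\subset\D$ gives
\begin{equation*}
\tfrac{1}{2}\int_{\tilde\D}|\nabla\Phi^\D|^2\,dV\leq\tfrac{1}{2}\int_\D|\nabla\Phi^\D|^2\,dV,
\end{equation*}
while $S[\tilde\xi]=S[\xi^\D]$ since the integrand is defined on the common surface $\F$. Invoking Theorem \ref{thm:VFsum_firstmode} on $\tilde\D$ with the admissible test pair $(\tilde\Phi,\tilde\xi)$ then yields
\begin{equation*}
\omega_1^{\tilde\D}\leq\tfrac{1}{2}\int_{\tilde\D}|\nabla\Phi^\D|^2\,dV+S[\tilde\xi]\leq\tfrac{1}{2}\int_{\D}|\nabla\Phi^\D|^2\,dV+S[\xi^\D]=\omega_1^\D.
\end{equation*}
The verticality of the container walls at $\F$ for both $\D$ and $\tilde\D$ is what ensures that Theorem \ref{thm:VFsum_firstmode} is available in both domains. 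The only technical point that warrants attention is the preservation of the $\F$-trace under restriction to $\tilde\D$, but this is routine because $\F$ is a common Lipschitz portion of the two boundaries; I therefore anticipate no substantive obstacle.
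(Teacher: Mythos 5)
Your proposal is correct and is essentially the paper's own argument: restrict the minimizer of the $\D$-problem to $\tilde\D$, observe that the surface energy $S[\xi]$ and all $\F$-constraints are unchanged while the Dirichlet energy can only decrease, and invoke the variational characterization of Theorem \ref{thm:VFsum_firstmode} on $\tilde\D$. The only difference is that you spell out the admissibility of the restricted test pair (trace preservation on the common free surface), which the paper leaves implicit.
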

\begin{proof}
Denote by $D_\Omega[\Phi]$ the Dirichlet energy of $\Phi\in H^1(\Omega)$, where the domain of integration is $\Omega$. Since $\tilde\D\subset\D$, any function $\Phi\in H^1(\D)$ satisfies $D_{\tilde\D}[\Phi]\le D_\D[\Phi]$. Let $(\Phi,\xi), (\tilde\Phi,\tilde\xi)$ be minimizers of the variational problem \eqref{eq:VFsum_firstmode} over domains $\D,\tilde \D$, respectively, with corresponding minimum $\omega_1^\D, \omega_1^{\tilde\D}$. It follows that
\[ \omega_1^{\tilde\D} = \D_{\tilde\D}[\tilde\Phi] + S[\tilde\xi] \le D_{\tilde\D}[\Phi] + S[\xi] \leq D_\D[\Phi] + S[\xi] = \omega_1^\D. \]
\end{proof}

The variational formulation \eqref{eq:VFsum_firstmode} in Theorem \ref{eq:VFsum_firstmode} admits a Rayleigh-Ritz generalization for higher eigenvalues of \eqref{eq:SloshingTension}. 

\begin{theorem}\label{thm:VFsum_highmode}
For any fixed integer $m>1$, let $(\Phi_j,\xi_j)$, $j=1,\ldots,m-1$ be the first $m-1$ eigenfunctions of \eqref{eq:SloshingTension}. Define 
\begin{align*}
\H_m = \Big\{(\Phi,\xi)\in\H\colon & \langle\Phi,\xi_j\rangle_{L^2(\F)}= 0 =  \langle\xi,\Phi_j\rangle_{L^2(\F)}, \ \  j=1,\ldots,m-1 \Big\}.
\end{align*}
Consider the following minimization problem
\begin{align}\label{eq:VFsum_highmode}
\inf_{(\Phi,\xi)\in \H_m } \ \  D[\Phi] + S[\xi]
\qquad  
\textrm{subject to} \ \  \langle\Phi,\xi\rangle_{L^2(\F)} = 1. 
\end{align}
There exists a minimizer $(\Phi_m,\xi_m)$ to the problem \eqref{eq:VFsum_highmode}. Moreover, $(\Phi_m,\xi_m)$  is an eigenfunction of \eqref{eq:SloshingTension} in the weak sense with corresponding eigenvalue $\omega_m=D[\Phi_m]+S[\xi_m]$.
\end{theorem}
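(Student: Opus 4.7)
The plan is to adapt the argument for Theorem~\ref{thm:VFsum_firstmode} to the constrained space $\H_m$, and then use the swap identities of Theorem~\ref{thm:Property1}(b)--(c) to kill the additional Lagrange multipliers introduced by the orthogonality conditions. First, I would observe that each map $(\Phi,\xi)\mapsto\langle\Phi,\xi_j\rangle_{L^2(\F)}$ and $(\Phi,\xi)\mapsto\langle\xi,\Phi_j\rangle_{L^2(\F)}$ is a bounded linear (hence weakly continuous) functional on $H^1(\D)\times H^1(\F)$ by continuity of the trace operator $\Gamma_\D$ and Cauchy--Schwarz. Therefore $\H_m$ is a closed subspace of $\H$, and in particular a Hilbert space that is sequentially weakly closed. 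Lemmas~\ref{thm:CoV_Coercivity} and \ref{thm:CoV_WC} carry over verbatim, so the direct method applies on $\H_m$: take a minimizing sequence, extract a weakly convergent subsequence, use weak closedness to keep the limit in $\H_m$, use Lemma~\ref{thm:CoV_WC} to preserve the constraint $\langle\Phi,\xi\rangle_{L^2(\F)}=1$, and use weak lower-semicontinuity to conclude that the weak limit $(\Phi_m,\xi_m)$ is a minimizer with $D[\Phi_m]+S[\xi_m]=\omega_m$.

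Next I would apply the method of Lagrange multipliers to the full system of constraints (unit-inner-product plus zero-mean plus the $2(m-1)$ orthogonality constraints). Writing $\lambda_m$ for the multiplier attached to the $L^2(\F)$ inner product, $\gamma_1,\gamma_2$ for those attached to the zero-mean conditions, and $\alpha_j,\beta_j$ for those attached to $\langle\Phi,\xi_j\rangle_{L^2(\F)}=0$ and $\langle\xi,\Phi_j\rangle_{L^2(\F)}=0$ respectively, the Euler--Lagrange equations read $\Delta\Phi_m=0$ in $\D$, $\partial_\n\Phi_m=0$ on $\B$, $\partial_\n\xi_m=0$ on $\partial\F$, together with the modified free-surface equations
\begin{align*}
(\Phi_m)_z &= \lambda_m\xi_m + \gamma_1 + \sum_{j=1}^{m-1}\alpha_j\xi_j, \\
\xi_m - \tfrac{1}{\bond}\Delta_\F\xi_m &= \lambda_m\Phi_m + \gamma_2 + \sum_{j=1}^{m-1}\beta_j\Phi_j,
\end{align*}
holding on $\F$. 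Integrating the two surface equations over $\F$ and using the divergence theorem together with Theorem~\ref{thm:Property1}(a) applied to each $(\Phi_j,\xi_j)$ kills $\gamma_1=\gamma_2=0$, exactly as in the proof of Theorem~\ref{thm:VFsum_firstmode}.

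The main obstacle is showing $\alpha_k=\beta_k=0$ for all $k<m$; this is where the swap identities are essential. For fixed $k$, Green's identity (using $\Delta\Phi_m=\Delta\Phi_k=0$ and Neumann data on $\B$) gives $\int_\F[(\Phi_m)_z\Phi_k-\Phi_m(\Phi_k)_z]\,dA=0$. Substituting the modified EL boundary condition for $(\Phi_m)_z$ and the original eigenvalue relation $(\Phi_k)_z=\omega_k\xi_k$ yields
\[ \lambda_m\langle\xi_m,\Phi_k\rangle_{L^2(\F)} + \sum_{j=1}^{m-1}\alpha_j\langle\xi_j,\Phi_k\rangle_{L^2(\F)} - \omega_k\langle\Phi_m,\xi_k\rangle_{L^2(\F)} = 0. \]
The first and third terms vanish because $(\Phi_m,\xi_m)\in\H_m$, while Theorem~\ref{thm:Property1}(c) forces $\langle\xi_j,\Phi_k\rangle_{L^2(\F)}=0$ for $j\neq k$, so the sum collapses to $\alpha_k\langle\xi_k,\Phi_k\rangle_{L^2(\F)}=\alpha_k$, the last equality following from the normalization of the $k$-th eigenfunction. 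Hence $\alpha_k=0$. An entirely analogous argument using the swap identity~\eqref{eq:P1swap2} in place of~\eqref{eq:P1swap1} and the eigenvalue relation $\xi_k-\tfrac{1}{\bond}\Delta_\F\xi_k=\omega_k\Phi_k$ shows $\beta_k=0$. (In the presence of repeated eigenvalues, I would first Gram--Schmidt the eigenfunctions within each eigenspace with respect to the bilinear pairing $\langle\cdot,\cdot\rangle_{L^2(\F)}$ so that Theorem~\ref{thm:Property1}(c) still delivers the needed orthogonalities.)

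Having eliminated every extra multiplier, the Euler--Lagrange system reduces to~\eqref{eq:SloshingTension} with $\omega=\lambda_m$. Finally, integrating $(\Phi_m)_z=\lambda_m\xi_m$ against $\Phi_m$ and $\xi_m-\tfrac{1}{\bond}\Delta_\F\xi_m=\lambda_m\Phi_m$ against $\xi_m$ over $\F$, applying the divergence theorem, and using the constraint $\langle\Phi_m,\xi_m\rangle_{L^2(\F)}=1$, identifies $\lambda_m=D[\Phi_m]+S[\xi_m]=\omega_m$, completing the proof.
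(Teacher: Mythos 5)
Your proposal is correct and follows essentially the same route as the paper: the direct method on the closed subspace $\H_m$, Lagrange multipliers for all constraints, elimination of $\gamma_1,\gamma_2$ by integrating over $\F$, and elimination of $\alpha_k,\beta_k$ via the swap identities and orthogonality relations of Theorem \ref{thm:Property1}. Your parenthetical remark about handling repeated eigenvalues is a point the paper's proof glosses over, but otherwise the two arguments coincide step for step.
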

\begin{proof}
Observe that a similar argument in Lemma \ref{thm:CoV_DirectSum} shows that $\H_m$ is a Hilbert space. Moreover, Lemmas \ref{thm:CoV_Coercivity}, \ref{thm:CoV_Coercivity}, \ref{thm:CoV_WC} hold in $\H_m$ since it is a subspace of $\H$. Consequently, there exists a minimizer $(\Phi_m, \xi_m)$ to the minimization problem \eqref{eq:VFsum_highmode}. Let $(\Phi_m,\xi_m)$ be a minimizer to problem \eqref{eq:VFsum_highmode}. \emph{The method of Lagrange multipliers} leads us to consider the following functional $J(\Phi,\xi)$ defined by
\begin{align*}
\int_\D |\nabla\Phi|^2\, dV + \int_\F \Big(\xi^2 + \dfrac{1}{\bond}|\nabla_\F\xi|^2\Big)\, dA & - \lambda_m\int_\F \Phi\xi\, dA - \gamma_1\int_\F\Phi\, dA - \gamma_2\int_\F\xi\, dA\\
& - \sum_{j=1}^{m-1} \left(\alpha_j\int_\F \Phi\xi_j\, dA\right) - \sum_{j=1}^{m-1} \left(\beta_j\int_\F \xi\Phi_j\, dA\right)
\end{align*}
with Lagrange multipliers $\lambda_m, \gamma_1, \gamma_2, (\alpha_j), (\beta_j)\in\R$. For  a minimizer $(\Phi_m,\xi_m)$, the first variation of $J(\Phi,\xi)$ in the direction of $(f,g)\in H^1(\D)\times H^1(\F)$ must be zero. 
A direct computation gives the Euler-Lagrange equations
\begin{subequations}
\begin{alignat}{3}
\label{eq:EL11} \Delta\Phi_m & = 0 && \qquad\textrm{ in }\D,  \\
\label{eq:EL12} \partial_\n\Phi_m & = 0 && \qquad\textrm{ on }\B, \\
\label{eq:EL13} (\Phi_m)_z & = \lambda_m\xi_m + \gamma_1 + \sum_{j=1}^{m-1} \alpha_j\xi_j && \qquad\textrm{ on }\F, \\
\label{eq:EL14} \xi_m - \dfrac{1}{\bond}\Delta_\F\xi_m & = \lambda_m\Phi_m + \gamma_2 + \sum_{j=1}^{m-1} \beta_j\Phi_j && \qquad\textrm{ on }\F, \\
\label{eq:EL15} \partial_\n\xi_m & = 0 && \qquad\textrm{ on }\partial\F.
\end{alignat}
\end{subequations}
A similar argument in the proof of Theorem \eqref{thm:VFsum_firstmode} shows that $\gamma_1=\gamma_2=0$. Observe that by integrating \eqref{eq:EL13}, \eqref{eq:EL14} against $\Phi_k$, $\xi_k$, respectively, for some $k=1,\ldots,m-1$ and using Lemma \ref{thm:Property1}, we obtain
\[
\int_\F (\Phi_m)_z\Phi_k\, dA  = \lambda_m {\int_\F\xi_m\Phi_k\, dA} + \sum_{j=1}^{m-1}\alpha_j\int_\F \xi_j\Phi_k\, dA  = \alpha_k\int_\F\xi_k\Phi_k\, dA  \]
which implies by Lemma  \ref{thm:Property1},
\[ \alpha_k\int_\F \xi_k\Phi_k\, dA = \int_\F (\Phi_m)_z\Phi_k\, dA= \int_\F \Phi_m(\Phi_k)_z\, dA  = \omega_k\int_\F \Phi_m\xi_k\, dA = 0.\]
Next, using Lemma \ref{thm:Property1}, we have
\[\int_\F \xi_m\xi_k\, dA - \dfrac{1}{\bond}\int_\F (\Delta_\F\xi_m)\xi_k\, dA  = \lambda_m {\int_\F\Phi_m\xi_k\, dA} + \sum_{j=1}^{m-1} \beta_j\int_\F\Phi_j\xi_k\, dA = \beta_k\int_\F \Phi_k\xi_k\, dA,\]
which implies by Lemma \ref{thm:Property1}
\[ \begin{split}\beta_k \int_\F\Phi_k\xi_k\, dA & = \int_\F\xi_m\xi_k\, dA - \dfrac{1}{\bond}\int_\F (\Delta_\F\xi_m)\xi_k\, dA = \int_\F\xi_m\xi_k\, dA - \dfrac{1}{\bond}\int_\F \xi_m(\Delta_\F\xi_k)\, dA \\
& = \int_\F\xi_m\xi_k\, dA - \int_\F \Big(\xi_m(\xi_k - \omega_k\Phi_k)\Big)\, dA= \omega_k\int_\F \xi_m\Phi_k\, dA = 0.\end{split}
\]
Since $\displaystyle\int_\F\Phi_k\xi_k\, dA\neq 0$ for every $k=1,\ldots,m-1$, we must have $\alpha_k=\beta_k=0$ for every $k=1,\ldots,m-1$ and \eqref{eq:EL13}, \eqref{eq:EL14} reduce to
\begin{subequations}
\begin{alignat}{3}
(\Phi_m)_z & = \lambda_m\xi_m && \quad\text{ on }\F, \label{eq:EL13a} \\
\xi_m - \dfrac{1}{\bond}\Delta_\F\xi_m & = \lambda_m\Phi_m && \quad\text{ on }\F.  \label{eq:EL14a}
\end{alignat}
\end{subequations}
Finally, a similar argument in the proof of Theorem \ref{thm:VFsum_firstmode} shows that 
\[ \lambda_m = \dfrac{1}{2}\left\{\int_\D |\nabla\Phi_m|^2\, dV + \int_\F \Big(\xi_m^2 + \dfrac{1}{\bond}|\nabla_\F\xi_m|^2\Big)\, dA\right\} = D[\Phi_m] + S[\xi_m] = \omega_m. \]
\end{proof}
An analogous statement as in Corollary \ref{thm:VFsum_equivalent} holds for the higher modes.

\subsection{Comparison to the variational formulation of the sloshing problem with surface tension of Kopachevsky and Krein} \label{sec:CompKopachevsky}
In \cite[pp.207]{Kopachevsky:2012aa}, a  variational formulation for the eigenvalues (sloshing frequencies) of the sloshing problem with surface tension is given. It is worth noting that the authors work in a more general setting. 
\begin{enumerate}
\item The static contact angle satisfies $\theta_s\neq\pi/2$, which means that $\F$ is a curved surface. Upon linearization, this introduces additional coupled terms in the kinematic boundary condition on $\F$. To compensate for this, a  curvilinear coordinate system is introduced.  
\item The dynamic contact angle $\theta_d$ is shown to remain unchanged, and the contact-line boundary condition on $\partial\F$ is of Robin-type, having the form $\partial_\n\eta = -\chi\eta$, where $\chi$ is a dimensionless constant depending on $\theta_s$ and curvature on $\partial\F$. 
\end{enumerate}
In the present work, for simplicity, we have assumed a contact angle of $\theta_s=\pi/2$ and used Cartesian coordinates. Moreover, we assume $\chi=0$ so that the Neumann boundary condition $\partial_\n\eta=0$ on $\partial\F$ is recovered. Below, we discuss the results in \cite{Kopachevsky:2012aa} in this  setting. 

While seeking  time-harmonic solutions for the sloshing problem with surface tension, the authors use the same ansatz as ours for the free surface height $\hat\eta$ but a slightly different one for the velocity potential $\hat\phi$. They choose $\hat\phi(x,y,z,t) = \omega\varphi(x,y,z)\cos(\omega t)$. The sloshing problem with surface tension takes the form
\begin{subequations}\label{eq:SloshingTension_Kopachevsky}
\begin{alignat}{2}
\label{eq:SloshingTension_Kopachevsky1} \Delta\varphi & = 0 \ \ && \textrm{ in }\D, \\
\label{eq:SloshingTension_Kopachevsky2} \partial_\n\varphi & = 0 \ \ && \textrm{ on }\B, \\
\label{eq:SloshingTension_Kopachevsky3} \varphi_z & = \xi \ \ && \textrm{ on }\F, \\
\label{eq:SloshingTension_Kopachevsky4} \xi - \frac{1}{\bond}\Delta_\F\xi & = \omega^2\varphi \ \ && \textrm{ on }\F, \\
\label{eq:SloshingTension_Kopachevsky5} \partial_\n\xi & = 0 \ \ && \textrm{ on } \partial\F.
\end{alignat}
\end{subequations}
One can show by integrating \eqref{eq:SloshingTension_Kopachevsky1} against $\varphi$ and using divergence theorem that
\begin{equation}
\int_{\F}\varphi\xi\, dA = \int_{\F}\varphi_z\varphi\, dA = \int_{\D}|\nabla\varphi|^2\, dV = 2D[\varphi]. \label{eq:SloshingTension_Kopachevsky6}
\end{equation}

The system \eqref{eq:SloshingTension_Kopachevsky} is studied as follows.  Define the following spaces of functions
\begin{align*}
L_{\F}^2(\F) & = \left\{\xi\in L^2(\F)\colon \int_{\F} \xi\, dA = 0\right\}, \\
H_{\F}^{1/2}(\F) & = \left\{\xi\in H^{1/2}(\F)\colon \int_{\F} \xi\, dA = 0\right\}, \\
H_{\F}^{-1/2}(\F) & \coloneqq\left(H_{\F}^{1/2}(\F)\right)^*, \textrm{ the dual space of }H_{\F}^{1/2}(\F).
\end{align*}
Define the \emph{Neumann-to-Dirichlet} operator $C\colon H_{\F}^{-1/2}(\F)\longrightarrow H^{1/2}_{\F}(\F)$ such that $\xi \mapsto\varphi|_\F$, where $\varphi\in H^1(\D)$ is the unique solution of the Neumann problem 
\begin{alignat*}{2}
\Delta\varphi &=0 && \ \ \textrm{ in }\D, \\ 
\partial_\n\varphi &= 0 && \ \ \textrm{ on }\B, \\ 
\varphi_z & = \xi  && \ \ \textrm{ on }\F.  
\end{alignat*}
Projecting \eqref{eq:SloshingTension_Kopachevsky4} onto the space $L_{\F}^2(\F)$ and viewing LHS of the projected equation as an operator $B$ acting on $L_{\F}^2(\F)$ together with \eqref{eq:SloshingTension_Kopachevsky5}, we obtain the generalized eigenvalue problem
\begin{equation}
\xi - \frac{1}{\bond}\Delta_\F\xi = B\xi = \omega^2C\xi, \ \ \xi\in L_{\F}^2(\F), \label{eq:Kopachevsky_Operator}
\end{equation}
where $C$ is restricted to $L_{\F}^2(\F)$. Physically, the operators $C$ and $B$ correspond to the kinetic energy and potential energy operator respectively. 
It is proved that the fundamental eigenvalue $\omega_1^2$ in \eqref{eq:Kopachevsky_Operator} has the  variational characterization, 
\begin{subequations}
 \label{eq:Kopachevsky_VF}
\begin{alignat}{2}
\label{eq:Kopachevsky_VFa}
\omega_1^2 = \inf_{\substack{\xi\in L_{\F}^2(\F) \\ \varphi\in H^1(\D)}}  \ & \frac{S[\xi]}{D[\varphi]}  \\ 
\label{eq:Kopachevsky_VFb}
\text{subject to } \ \Delta\varphi & = 0  && \ \ \textrm{ in }\D, \\ 
\label{eq:Kopachevsky_VFc}
\partial_\n\varphi & = 0 && \ \ \textrm{ on }\B,  \\ 
\label{eq:Kopachevsky_VFd}
\varphi_z & = \xi && \ \ \textrm{ on }\F, \\ 
\label{eq:Kopachevsky_VFe}
\int_\F\varphi\, dA & = 0.
\end{alignat}
\end{subequations}

Comparing the variational formulations \eqref{eq:Kopachevsky_VF} and \eqref{eq:VFsum_firstmode}, we make the following observations.  Both variational formulations share the  constraint that the velocity potential and surface height  have zero mean over $\F$. In  \eqref{eq:VFsum_firstmode}, $(\Phi,\xi)$ need only satisfy the single integral constraint $\langle\Phi,\xi\rangle_{L^2(\F)} = 1$. However, in \eqref{eq:Kopachevsky_VF} each $\varphi$ must satisfy the constraint that $C\xi=\varphi|_{\F}$, {\it{i.e.}} they are solutions of the Laplace problem with Neumann data on $\F$ equal to $\xi$.

We claim that \eqref{eq:Kopachevsky_VF} follows  from  \eqref{eq:VFsum_firstmode}. To see this, we use the equivalent formulation of \eqref{eq:VFsum_firstmode} from Corollary \ref{thm:VFsum_equivalent}. Suppose $(\Phi_1,\xi_1)$ is a minimizer of \eqref{eq:VFsum_firstmode_equiv} with
\[ \omega_1 = \frac{D[\Phi_1] + S[\xi_1]}{|\langle\Phi_1,\xi_1\rangle_{L^2(\F)}|}>0. \]
Writing $\omega_1\varphi_1=\Phi_1$ and using the fact that $(\varphi_1,\xi_1)$ satisfies \eqref{eq:SloshingTension_Kopachevsky6}, we have
\[ \omega_1 = \frac{D[\omega_1\varphi_1] + S[\xi_1]}{|\langle\omega_1\varphi_1,\xi_1\rangle_{L^2(\F)}|} = \frac{\omega_1^2\left(D[\varphi_1] + \frac{S[\xi_1]}{\omega_1^2}\right)}{\omega_1|\langle\varphi_1,\xi_1\rangle_{L^2(\F)}|} = \omega_1\left(\frac{D[\varphi_1] + \frac{S[\xi_1]}{\omega_1^2}}{2D[\varphi_1]}\right). \]
Rearranging yields
$ \omega_1^2 = \frac{S[\xi_1]}{D[\varphi_1]}$. After specifying $\xi_1$,  we can obtain  $\varphi_1$ by solving 
the Neumann problem \eqref{eq:Kopachevsky_VFb}, \eqref{eq:Kopachevsky_VFc}, \eqref{eq:Kopachevsky_VFd}. 
It follows that  $(\varphi_1,\xi_1)\in\H$ minimizes \eqref{eq:Kopachevsky_VF}. 
However, it is not obvious how to  deduce the unconstrained formulation  \eqref{eq:VFsum_firstmode} directly from the constrained formulation \eqref{eq:Kopachevsky_VF}.


\section{Asymptotics} \label{sec:Large Bond Number Asymptotics} In this section, we consider the asymptotic limit where the Bond number,  $\bond$, is large for the sloshing problem with surface tension \eqref{eq:SloshingTension}. We first show that in the limit $\bond\to\infty$, i.e. zero surface tension, we recover the variational characterization for the mixed Steklov-Neumann problem or sloshing problem \eqref{eq:Steklov}, as derived by Troesch \cite{Troesch:1965aa}.

\begin{corollary} \label{cor:ZeroSurfaceTension}
Suppose $(\Phi_1,\xi_1)$ is a minimizer of the variational problem \eqref{eq:VFsum_firstmode} with $\bond=\infty$. Then $\sqrt{\omega_1}\Phi_1$ with $\omega_1=D[\Phi_1]+S[\xi_1]$ is a minimizer of the variational principle for the mixed Steklov-Neumann eigenvalue problem  \eqref{eq:SteklovVF}. 
\end{corollary}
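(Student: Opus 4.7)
The plan is to identify $\tilde\Phi := \sqrt{\omega_1}\,\Phi_1$ as a weak eigenfunction of the mixed Steklov-Neumann problem \eqref{eq:Steklov} with eigenvalue $\omega_1^2$ and normalization $\|\tilde\Phi\|_{L^2(\F)}=1$, and then to verify that its Dirichlet energy is actually the infimum in \eqref{eq:SteklovVF} by constructing competing admissible pairs for the surface tension variational problem. First I would specialize the Euler-Lagrange equations from the proof of Theorem \ref{thm:VFsum_firstmode}: setting $\bond=\infty$ collapses \eqref{eq:EL4a} to $\xi_1=\omega_1\Phi_1$ on $\F$, and substituting this into \eqref{eq:EL3a} yields $(\Phi_1)_z=\omega_1^2\Phi_1$ on $\F$. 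Combined with \eqref{eq:EL1}--\eqref{eq:EL2}, this shows that the scalar multiple $\tilde\Phi$ solves \eqref{eq:Steklov} weakly at eigenvalue $\omega_1^2$.

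Next I would confirm the normalization and compute the candidate minimum value. Lemma \ref{thm:Property4_frequency} with $\langle\Phi_1,\xi_1\rangle_{L^2(\F)}=1$ gives $D[\Phi_1]=S[\xi_1]=\omega_1/2$; at $\bond=\infty$ the latter reads $\tfrac{1}{2}\|\xi_1\|_{L^2(\F)}^2=\omega_1/2$, so $\|\xi_1\|_{L^2(\F)}^2=\omega_1$. Combining with the identity $\xi_1=\omega_1\Phi_1$ forces $\|\Phi_1\|_{L^2(\F)}^2=1/\omega_1$, hence $\|\tilde\Phi\|_{L^2(\F)}=1$. The mean-zero constraint $\int_\F\Phi_1\, dA=0$ built into $\H$ places $\tilde\Phi\in H_1$, and scaling gives $\int_\D|\nabla\tilde\Phi|^2\, dV=2\omega_1 D[\Phi_1]=\omega_1^2$.

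Finally, to confirm $\omega_1^2$ is the Steklov infimum, I would take an arbitrary competitor $\Psi\in H_1$ with $\|\Psi\|_{L^2(\F)}=1$ and engineer admissible surface tension pairs from it. Since $H^1(\F)\cap\{\text{zero mean}\}$ is dense in $L^2(\F)\cap\{\text{zero mean}\}$, I can approximate $\alpha\,\Psi|_\F$ in $L^2(\F)$ by mean-zero functions $(\xi_n)\subset H^1(\F)$, with $\alpha>0$ to be chosen. Setting $c_n=1/\langle\Psi,\xi_n\rangle_{L^2(\F)}\to 1/\alpha$, the pairs $(c_n\Psi,\xi_n)\in\H$ are admissible for \eqref{eq:VFsum_firstmode} for large $n$, so by minimality of $(\Phi_1,\xi_1)$ and passage to the limit,
\[ \omega_1\le\lim_{n\to\infty}\left(c_n^2 D[\Psi]+\tfrac{1}{2}\|\xi_n\|_{L^2(\F)}^2\right)=\frac{D[\Psi]}{\alpha^2}+\frac{\alpha^2}{2}. \]
Optimizing over $\alpha$ (optimum at $\alpha^2=\sqrt{2D[\Psi]}$) yields $\omega_1\le\sqrt{2D[\Psi]}$, equivalently $\omega_1^2\le\int_\D|\nabla\Psi|^2\, dV$. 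The main technical obstacle is precisely this last step: traces of $H^1(\D)$-functions lie only in $H^{1/2}(\F)$, so one cannot directly substitute $\xi=\alpha\,\Psi|_\F$ into the surface tension variational problem, and the density/approximation argument in $L^2(\F)$ is the bridge between the two variational principles.
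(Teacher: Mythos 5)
Your proposal is correct, and its first two paragraphs follow essentially the same route as the paper: specialize the Euler--Lagrange system at $\bond=\infty$ to get $\xi_1=\omega_1\Phi_1$ on $\F$, deduce $D[\Phi_1]=S[\xi_1]=\omega_1/2$, and then check that $\tilde\Phi=\sqrt{\omega_1}\Phi_1$ satisfies $\|\tilde\Phi\|_{L^2(\F)}=1$ and $\int_\D|\nabla\tilde\Phi|^2\,dV=\omega_1^2$. Where you genuinely diverge is your third paragraph. The paper stops after verifying that $\tilde\Phi$ is an admissible normalized Steklov--Neumann eigenfunction whose Rayleigh quotient equals $\omega_1^2$; it does not explicitly show that no competitor $\Psi\in H_1$ with $\|\Psi\|_{L^2(\F)}=1$ achieves a smaller Dirichlet energy, implicitly relying on the known spectral theory of \eqref{eq:Steklov} to identify $\omega_1^2$ with the infimum. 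Your construction of the competing pairs $(c_n\Psi,\xi_n)$, with $\xi_n\to\alpha\,\Psi|_\F$ in $L^2(\F)$ and the optimization $\inf_\alpha\bigl(D[\Psi]/\alpha^2+\alpha^2/2\bigr)=\sqrt{2D[\Psi]}$, supplies exactly this missing converse inequality $\omega_1^2\le\int_\D|\nabla\Psi|^2\,dV$, and the density step correctly handles the fact that traces of $H^1(\D)$ functions need not lie in $H^1(\F)$. So your argument is self-contained where the paper's is not; the cost is the extra approximation machinery, and the benefit is that the word ``minimizer'' in the statement is actually proved rather than inherited from the classical theory of the Steklov--Neumann problem.
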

\begin{proof}
From Theorem \ref{thm:VFsum_firstmode},  we know that $(\Phi_1,\xi_1)$ satisfies  the constraint $\langle\Phi_1,\xi_1\rangle_{L^2(\F)}=1$ and the following equation in the weak sense
\begin{equation}
\xi_1 = \omega_1\Phi_1 \ \ \textrm{ on }\F \label{eq:Reduction1}
\end{equation}
with $\omega_1=D[\Phi_1]+S[\xi_1]>0$. Integrating \eqref{eq:Reduction1} against $\xi_1$ over $\F$, together with the constraint yields $S[\xi_1]=\omega_1/2$; this also implies $D[\Phi_1]=\omega_1/2$. Defining $\tilde\Phi=\sqrt{\omega_1}\Phi_1$, integrating \eqref{eq:Reduction1} against $\Phi_1$ over $\F$, and using the constraint again yields
\[ 1=\omega_1\langle\Phi_1,\Phi_1\rangle_{L^2(\F)} = \langle\tilde\Phi,\tilde\Phi\rangle_{L^2(\F)} \]
and
\[ \int_\D |\nabla\tilde\Phi|^2\, dV = \omega_1\int_\D|\nabla\Phi_1|^2\, dV = \omega_1(2D[\Phi_1]) = \omega_1^2. \]
\end{proof}

We now investigate the asymptotic behavior of the eigenvalues of \eqref{eq:SloshingTension} in the limit where the Bond number is large. Let $\varepsilon=\bond^{-1}$ and $\omega(\varepsilon)$ be any eigenvalue satisfying \eqref{eq:SloshingTension} for a fixed $\varepsilon$. The previous result shows that $\omega(0)$ is an eigenvalue to the mixed Steklov-Neumann problem \eqref{eq:Steklov}. The following result gives the first perturbation for a simple eigenvalue, $\omega(\varepsilon)$  for $\varepsilon\ll 1$, {\it{i.e.}} $\bond\gg 1$. 

\begin{theorem}\label{thm:AsympFirstOrder}
Assume $\n_{\partial\F}(x) = \n_\B(x)$ for all $x\in\partial\F$. If $\omega^0 = \omega(0)$ is a simple eigenvalue, then  the derivative of $\omega = \omega(\varepsilon )$, satisfying \eqref{eq:SloshingTension}, with respect to $\varepsilon$ is given by 
\[ \frac{d\omega}{d\varepsilon} \bigg|_{\varepsilon = 0} = \frac{\omega^0}{2}\left(\frac{\|\nabla_\F\Phi^0\|_{L^2(\F)}^2}{\|\Phi^0\|_{L^2(\F)}^2}\right), \]
where $(\omega^0,\Phi^0)$ satisfy the mixed Steklov-Neumann problem \eqref{eq:Steklov}.
\end{theorem}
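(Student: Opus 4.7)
The plan is to apply regular perturbation theory in the small parameter $\varepsilon = \bond^{-1}$ about $\varepsilon = 0$. Under the hypothesis that $\omega^0$ is a simple eigenvalue, standard analytic perturbation theory for self-adjoint eigenvalue problems with compact resolvent ensures that one may select an eigenvalue branch $\varepsilon \mapsto (\omega(\varepsilon),\Phi(\varepsilon),\xi(\varepsilon))$ of \eqref{eq:SloshingTension} that is $C^1$ (in fact analytic) in $\varepsilon$ near $\varepsilon = 0$ and reduces to $(\omega^0,\Phi^0,\omega^0\Phi^0)$ at $\varepsilon = 0$. I then write
\[
\omega = \omega^0 + \varepsilon\omega^1 + o(\varepsilon),\qquad \Phi = \Phi^0 + \varepsilon\Phi^1 + o(\varepsilon),\qquad \xi = \xi^0 + \varepsilon\xi^1 + o(\varepsilon),
\]
and match terms order by order in $\varepsilon$.

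It is convenient first to collapse the two free-surface conditions \eqref{eq:SloshingTension3}--\eqref{eq:SloshingTension4} into a single Steklov-type relation: solving \eqref{eq:SloshingTension4} for $\xi = \omega\Phi + \varepsilon \Delta_\F \xi$ and substituting into \eqref{eq:SloshingTension3} gives, on $\F$,
\[
\Phi_z = \omega^2\Phi + \omega^2\varepsilon\, \Delta_\F \Phi + O(\varepsilon^2).
\]
The $O(1)$ equation recovers the mixed Steklov-Neumann problem for $(\omega^0,\Phi^0)$, while the $O(\varepsilon)$ equation produces the linearized source problem
\begin{align*}
\Delta \Phi^1 &= 0 \quad \text{in } \D,\\
\partial_\n \Phi^1 &= 0 \quad \text{on } \B,\\
\Phi^1_z - (\omega^0)^2\Phi^1 &= 2\omega^0\omega^1 \Phi^0 + (\omega^0)^2 \Delta_\F \Phi^0 \quad \text{on } \F.
\end{align*}

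Next I would isolate $\omega^1$ by a Fredholm solvability condition obtained from pairing this problem with $\Phi^0$. Applying Green's identity to $(\Phi^0,\Phi^1)$ on $\D$, harmonicity kills the volume term, the Neumann conditions on $\B$ kill the $\B$ boundary contribution, and the Steklov condition $\Phi^0_z = (\omega^0)^2 \Phi^0$ causes the $(\omega^0)^2 \Phi^0\Phi^1$ terms on $\F$ to cancel. What remains is
\[
2\omega^0\omega^1\, \|\Phi^0\|_{L^2(\F)}^2 = -(\omega^0)^2 \int_\F \Phi^0\, \Delta_\F \Phi^0 \, dA.
\]
One integration by parts on $\F$ rewrites the right-hand side as $(\omega^0)^2 \|\nabla_\F \Phi^0\|_{L^2(\F)}^2$ minus a contact-line term $(\omega^0)^2 \int_{\partial\F} \Phi^0\, \partial_\n \Phi^0 \, ds$. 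The standing assumption $\n_{\partial\F} = \n_\B$ on $\partial\F$ is precisely what kills this boundary term: since $\n_{\partial\F}$ is tangent to $\F$, the surface-normal derivative $\partial_\n \Phi^0$ on $\partial\F$ coincides with the three-dimensional directional derivative of $\Phi^0$ in the direction $\n_\B$, which vanishes by continuity from the Neumann condition \eqref{eq:SloshingTension2} on $\B$. Solving for $\omega^1 = d\omega/d\varepsilon|_{\varepsilon=0}$ then yields exactly the claimed formula.

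The step I expect to be the main obstacle is the very first one, namely the justification that a smooth eigenvalue branch exists through $\varepsilon = 0$. The limit $\varepsilon \to 0$ is singular in the sense that the principal part of \eqref{eq:SloshingTension4} degenerates and the natural function space for $\xi$ changes; nevertheless, for a simple Steklov-Neumann eigenvalue this is a classical situation. A convenient path is to recast the system through the Neumann-to-Dirichlet framework of Section \ref{sec:CompKopachevsky} and invoke Kato's analytic perturbation theorem for the resulting self-adjoint operator pencil; an alternative is to differentiate the variational characterization of Theorem \ref{thm:VFsum_firstmode} via the implicit function theorem applied to the one-parameter family of self-adjoint bilinear forms indexed by $\varepsilon$.
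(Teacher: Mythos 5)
Your proof is correct and follows essentially the same route as the paper: a formal expansion in $\varepsilon$, a solvability condition obtained by pairing the $O(\varepsilon)$ problem with $\Phi^0$ via Green's identity on $\D$, and an integration by parts on $\F$ whose contact-line term vanishes under the assumption $\n_{\partial\F}=\n_\B$. The only differences are organizational — you eliminate $\xi$ up front to obtain a single perturbed Steklov condition, whereas the paper keeps the coupled $(\Phi^1,\xi^1)$ system and combines two integral identities (using $\partial_\n\xi^0=0$ on $\partial\F$ to kill the contact-line term rather than your equivalent geometric argument) — and the paper, like you, does not rigorously justify the existence of a differentiable eigenvalue branch, so the gap you flag is present in the original as well.
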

\begin{proof}
Consider the expansion of $(\omega,\Phi,\xi)$ in the form
\begin{align*}
\omega  &= \omega^0  + \varepsilon\omega^1  + \o(\varepsilon), \\
\Phi  &= \Phi^0 + \varepsilon\Phi^1 + \o(\varepsilon), \\
\xi  &= \xi^0 + \varepsilon\xi^1 + \o(\varepsilon).
\end{align*}
Substituting these expansions into \eqref{eq:SloshingTension} and collecting $\O(1)$ terms yields
\begin{subequations}
\begin{alignat}{2}
\label{eq:Asymp1} \Delta\Phi^0 & = 0 && \qquad\textrm{ in }\D, \\ 
\label{eq:Asymp2} \partial_\n\Phi^0 & = 0 && \qquad\textrm{ on }\B, \\
\label{eq:Asymp3} (\Phi^0)_z & = \omega^0\xi^0 && \qquad\textrm{ on }\F, \\
\label{eq:Asymp4} \xi^0 & = \omega^0\Phi^0 && \qquad\textrm{ on }\F, \\
\label{eq:Asymp5} \partial_\n\xi^0 & = 0 && \qquad\textrm{ on }\partial\F,
\end{alignat}
\end{subequations}
while collecting $\O(\varepsilon)$ terms yields the following PDEs:
\begin{subequations}\label{eq:PertEqns}
\begin{alignat}{2}
\label{eq:Asymp6} \Delta\Phi^1 & = 0 && \qquad\textrm{ in }\D,  \\
\label{eq:Asymp7} \partial_\n\Phi^1 & = 0 && \qquad\textrm{ on }\B, \\
\label{eq:Asymp8} (\Phi^1)_z & = \omega^0\xi^1 + \omega^1\xi^0 && \qquad\textrm{ on }\F, \\
\label{eq:Asymp9} \xi^1 - \Delta_\F\xi^0 & = \omega^0\Phi^1 + \omega^1\Phi^0 && \qquad\textrm{ on }\F, \\
\label{eq:Asymp10} \partial_\n\xi^1 & = 0 && \qquad\textrm{ on }\partial\F.
\end{alignat}
\end{subequations}
Multiplying \eqref{eq:Asymp8}, \eqref{eq:Asymp9}, against $\Phi^0, \xi^0$, respectively, and integrating over $\F$ yields
\begin{align}
\label{eq:Asymp8a} \int_\F (\Phi^1)_z\Phi^0\, dA & = \omega^0\int_\F\xi^1\Phi^0\, dA + \omega^1\int_\F \xi^0\Phi^0\, dA \\
\label{eq:Asymp9a} \int_\F \Big(\xi^1\xi^0 - (\Delta_\F\xi^0)\xi^0\Big) \, dA & = \omega^0\int_\F \Phi^1\xi^0\, dA + \omega^1\int_\F \Phi^0\xi^0\, dA.
\end{align}
One can easily deduce using \eqref{eq:Asymp1}, \eqref{eq:Asymp2}, \eqref{eq:Asymp6}, \eqref{eq:Asymp7} that
\[ \int_\F(\Phi^1)_z\Phi^0\, dA = \int_\F\Phi^1(\Phi^0)_z\, dA. \]
Thus, using \eqref{eq:Asymp4}, \eqref{eq:Asymp3}, \eqref{eq:Asymp9a}, \eqref{eq:Asymp5} and \eqref{eq:Asymp4}, \eqref{eq:Asymp8a} reduces to
\begin{alignat*}{2}
\omega^1\int_\F\xi^0\Phi^0\, dA & = \int_\F(\Phi^1)_z\Phi^0\, dA - \omega^0\int_\F\xi^1\Phi^0\, dA = \int_\F\Phi^1(\Phi^0)_z\, dA - \int_\F\xi^1\xi^0\, dA \\
& = \omega^0\int_\F\Phi^1\xi^0\, dA - \int_\F\xi^1\xi^0\, dA  = -\int_\F (\Delta_\F\xi^0)\xi^0\, dA - \omega^1\int_\F \Phi^0\xi^0\, dA \\
& = \int_\F |\nabla_\F\xi^0|^2\, dA - \omega^1\int_\F \Phi^0\xi^0\, dA 
\end{alignat*}
which implies
\[ \omega^1   = \frac{1}{2}\left(\dfrac{\int_\F|\nabla_\F\xi^0|^2\, dA}{\int_\F\Phi^0\xi^0\, dA}\right) = \frac{\omega^0}{2}\left(\dfrac{\int_\F|\nabla_\F\Phi^0|^2\, dA}{\int_\F |\Phi^0|^2\, dA}\right). \]
\end{proof}

\subsection*{Example: A cylindrical container}
We illustrate Theorem \ref{thm:AsympFirstOrder} with a cylindrical container, where the exact solution is well-known; see, {\it e.g.} \cite[pp.764]{Ibrahim:2005aa} or \cite[pp.415]{Reynolds:1966aa}. Consider a solid cylinder with radius $a>0$. Assume that the free surface $\F$ and the flat bottom lies at the plane $\{z=0\}$ and $\{z=-h\}$, respectively, with $h>0$ . Multiplying \eqref{eq:SloshingTension4} by $\omega$ and substituting \eqref{eq:SloshingTension3} yields the simplified system
\begin{subequations}\label{eq:SloshingTensionCylinder}
\begin{alignat}{2}
\Delta\Phi & = 0 && \qquad\textrm{ in }\D, \\
\partial_\n\Phi & = 0 && \qquad\textrm{ on }\B, \\
\Phi_z - \omega^2\Phi & = \frac{1}{\bond}\Big[(\Phi_z)_{xx} + (\Phi_z)_{yy}\Big] && \qquad\textrm{ on }\F.
\end{alignat}
\end{subequations}
Using \emph{separation of variables}, one can compute the explicit solution of \eqref{eq:SloshingTensionCylinder}  in cylindrical coordinates $(r,\theta,z)$ with (upon nondimensionalizing the length with radius $a>0$)
\[ 0\le r\le 1, \ \ 0\le\theta\le 2\pi, \ \ -\frac{h}{a}\le z\le 0. \]

\begin{lemma}
The solution of \eqref{eq:SloshingTensionCylinder} has the form
\begin{subequations}
\begin{align}
\Phi(r,\theta,z) = \sum_{n=0}^\infty\sum_{m=1}^\infty & J_n\left(z_{nm}r\right)\dfrac{\cosh\left(\dfrac{z_{nm}(az+h)}{a}\right)}{\cosh\left(\dfrac{z_{nm}h}{a}\right)}\Big[a_{nm}\cos(n\theta) + b_{nm}\sin(n\theta)\Big] \label{eq:cylinder1}\\
& \omega_{nm}^2= z_{nm}\tanh\left(\dfrac{z_{nm}h}{a}\right)\left[1+\frac{1}{\bond}z_{nm}^2\right], \label{eq:cylinder2}
\end{align}
\end{subequations}
where $J_n(\cdot)$ is the Bessel function of the first kind with order $n$ and $z_{nm}$ is the $m$th root of $J_n'(\cdot)$. In the case where $\bond=\infty$, we recover the eigenvalues $\lambda_{nm}$ for the mixed Steklov-Neumann problem \eqref{eq:Steklov}
\begin{equation}
\lambda_{nm}^2 = z_{nm}\tanh\left(\frac{z_{nm}h}{a}\right). \label{eq:cylinder3}
\end{equation}
For $n=0$, the eigenvalues $\omega_{0m}, \lambda_{0m}$ are simple. 
\end{lemma}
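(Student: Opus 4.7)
The plan is to use separation of variables in cylindrical coordinates $(r,\theta,z)$, writing $\Phi(r,\theta,z) = R(r)\Theta(\theta)Z(z)$ and substituting into the Laplace equation \eqref{eq:SloshingTensionCylinder}. Standard manipulation gives
\[
\frac{1}{R}\left(R'' + \frac{1}{r}R'\right) + \frac{1}{r^2}\frac{\Theta''}{\Theta} + \frac{Z''}{Z} = 0,
\]
leading to three separated ODEs. Imposing $2\pi$-periodicity on $\Theta$ forces $\Theta''/\Theta = -n^2$ for $n\in\N_0$, with $\Theta(\theta)=a\cos(n\theta)+b\sin(n\theta)$. Setting the remaining separation constant to $k^2>0$ yields Bessel's equation of order $n$ for $R$; the requirement that $\Phi$ be regular at $r=0$ selects $R(r)=J_n(kr)$, while the Neumann condition on the cylindrical wall at $r=1$ gives $J_n'(k)=0$, so that $k = z_{nm}$, the $m$-th positive root of $J_n'$.

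Next I would solve the $z$-ODE $Z''=z_{nm}^2Z$ and use the Neumann condition at the bottom $z=-h/a$ (part of $\B$) to pin down $Z(z)=\cosh(z_{nm}(az+h)/a)$ up to a constant (absorbed into $a_{nm},b_{nm}$); the normalization by $\cosh(z_{nm}h/a)$ in \eqref{eq:cylinder1} is a convenient rescaling so that $Z(0)=1$. Plugging the resulting product into the free-surface condition in \eqref{eq:SloshingTensionCylinder} at $z=0$ and using that $\Delta_\F\Phi|_{\F}=-z_{nm}^2\Phi|_{\F}$ (since each angular-radial mode is an eigenfunction of $\Delta_\F$ with eigenvalue $-z_{nm}^2$, as follows from the Bessel ODE combined with $\partial_\theta^2$) gives
\[
Z'(0) - \omega_{nm}^2 = -\tfrac{1}{\bond}z_{nm}^2 Z'(0),
\]
which, after computing $Z'(0) = z_{nm}\tanh(z_{nm}h/a)$, rearranges to \eqref{eq:cylinder2}. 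Setting $\bond=\infty$ immediately removes the bracketed correction and produces the Steklov--Neumann eigenvalues \eqref{eq:cylinder3}.

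For the simplicity claim when $n=0$, I would argue as follows. The $n=0$ modes are $\theta$-independent, so the $\sin(n\theta)$ branch is absent and the eigenspace associated with each $z_{0m}$ has dimension one (as opposed to two for $n\ge 1$). It remains to observe that the dispersion relation \eqref{eq:cylinder2}, viewed as the function $f(z)=z\tanh(zh/a)\bigl[1 + z^2/\bond\bigr]$, is strictly increasing on $(0,\infty)$, so distinct values of $z_{nm}$ produce distinct eigenvalues; since the zeros $\{z_{0m}\}_{m\ge 1}$ of $J_0'$ do not coincide with any zero of $J_n'$ for $n\ge 1$ (a standard interlacing/non-coincidence property of Bessel zeros), no $n\ge 1$ mode can share the eigenvalue $\omega_{0m}^2$, and simplicity follows.

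The only nontrivial obstacle is verifying the tangential Laplacian identity $\Delta_\F\Phi|_{\F}=-z_{nm}^2\Phi|_{\F}$ cleanly; this is a short computation that follows by rewriting $\Delta_\F$ in polar coordinates on $\F=\{z=0\}$ and using Bessel's equation for $J_n(z_{nm}r)$ together with the $\theta$-eigenfunction property. Once that identity is recorded, the remaining steps are routine algebra and standard properties of Bessel-function zeros.
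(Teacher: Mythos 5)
Your separation-of-variables computation is exactly the route the paper intends (the paper itself only says ``using separation of variables'' and defers the details to the cited references), and its core is correct: periodicity gives $\Theta''=-n^2\Theta$, regularity at $r=0$ and the Neumann condition at $r=1$ give $R=J_n(z_{nm}r)$ with $J_n'(z_{nm})=0$, the no-flux condition at the flat bottom $z=-h/a$ selects the $\cosh$ profile with $Z(0)=1$, $Z'(0)=z_{nm}\tanh(z_{nm}h/a)$, and since $\Phi_z|_{z=0}$ is again a multiple of $J_n(z_{nm}r)\Theta(\theta)$, the identity $\Delta_\F(\Phi_z)=-z_{nm}^2\Phi_z$ converts the free-surface condition into $Z'(0)-\omega^2=-\bond^{-1}z_{nm}^2Z'(0)$, i.e.\ \eqref{eq:cylinder2}, with \eqref{eq:cylinder3} as the $\bond=\infty$ limit.

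The one genuine soft spot is the simplicity claim for $n=0$. Your reduction is right: each $z_{0m}$ carries a one-dimensional angular eigenspace, and since $f(z)=z\tanh(zh/a)\left[1+z^2/\bond\right]$ is strictly increasing on $(0,\infty)$, simplicity of $\omega_{0m}$ is equivalent to $z_{0m}\neq z_{nm'}$ for all $n\ge 1$, $m'\ge 1$. But the ``standard interlacing property'' only relates \emph{consecutive} orders (e.g.\ $j'_{\nu,k}$ is increasing in $\nu$), so it does not exclude a coincidence $j'_{0,m}=j'_{n,m'}$ for $n\ge 2$; and the classical non-coincidence results (Bourget's hypothesis, proved by Siegel) concern common positive zeros of $J_n$ and $J_{n+m}$ themselves, not of their derivatives. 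As written, this step rests on an unproved assertion and needs either a precise reference or an argument; note that for each fixed $m$ only finitely many orders $n$ need to be excluded, since $j'_{n,1}>\sqrt{n(n+2)}\to\infty$. (In fairness, the paper also asserts simplicity for $n=0$ without proof.)
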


For $n=0$, it is not difficult to verify that the first-order term in the expansion  of \eqref{eq:cylinder2}, 
\[ \omega_{0m}(\varepsilon ) 
= \omega_{0m}(0)  + \varepsilon \omega_{0m}'(\varepsilon )  + o(\varepsilon ) 
= \lambda_{0m} + \varepsilon  \left(\frac{\lambda_{0m}}{2}z_{0m}^2\right) + o(\varepsilon ), 
\]
agrees with  Theorem \ref{thm:AsympFirstOrder}, 
\[ \omega_{0m} =\lambda_{0m} + \varepsilon   \left(\frac{\lambda_{0m}}{2}\frac{\|\nabla_\F\Phi_{0m}\|_{L^2(\F)}^2}{\|\Phi_{0m}\|_{L^2(\F)}^2}\right) + o(\varepsilon ). \]


\section{Discussion} \label{sec:disc}
We have considered the small-amplitude fluid sloshing problem for an incompressible, inviscid, irrotational fluid in a container, including effects due to surface tension on the free surface.  As opposed to the zero surface tension case, where the problem reduces to a partial differential equation for the velocity potential, we obtain a coupled system for the velocity potential and the free surface displacement \eqref{eq:SloshingTension}. In Section \ref{sec:Proof}, we derived a new variational formulation of the coupled problem and establish the existence of solutions using the direct method from the calculus of variations. In the limit of zero surface tension, we recover the variational formulation of the classical mixed Steklov-Neumann eigenvalue problem \eqref{eq:Steklov}, as derived by Troesch, and obtain the first-order perturbation formula for a simple eigenvalue. 

As mentioned in Subsection \ref{sec:ZeroSurfTen}, the location of high spots for the sloshing problem \eqref{eq:Steklov} has been  investigated in two and three dimensions. Some results for specific container geometries   are summarized as follows. 

\begin{enumerate}
\item Consider a trough $W=\D\times (0,l)\subset\R^3$ of length $l>0$, with uniform cross section $\D$.  If the wetted boundary, $\B$, is the graph of a negative $C^2$-function given on $\F$ and $\B$ intersects $\F$ at a  nonzero angle, then the trace $\Phi_1(x,y,0)$ attains  its extrema  only on the boundary of the rectangular free surface of the trough $\partial\F$ \cite{Kulczycki:2011aa}. A similar result for the two-dimensional cross section is given in \cite{Kulczycki:2009aa}

\item Consider a bounded Lipschitz domain $\D$ which is axisymmetric and convex, such that $\D\subset\F\times\{z\in (-\infty,0)\}$. The boundary $\partial\D$ consists of the free surface $\F$ which is a disc of radius $a>0$ and the wetted boundary $\B$. 
If $\Phi_1(x,y,z)$ is odd in the $x$-variable, then the free surface height attains its extrema at $(\pm a,0,0)$ \cite{Kulczycki:2012aa}. 

\item Consider the \emph{ice fishing problem}, where $\D = \R_-^3 = \{(x,y)\in\R^2, z\in(-\infty,0)\}$ with free surface $\F=\{x^2+y^2<b^2, z=0\}$ and wetted boundary $\B=\partial\R_-^3\setminus\bar\F$. It was shown in \cite{Kulczycki:2009aa} that $\Phi_1$ attains its extrema on the interior of $\F$. 
\end{enumerate}

Motivated by the ice fishing problem, in \cite{Kulczycki:2014aa}, axisymmetric, bulbous ($\D\not\subset \F\times\{z\in (-\infty,0)\}$) containers   are studied using finite element methods. It is observed that such domains have fundamental eigenfunctions with high spots which are on the interior of $\F$.  
However, for this container geometry, $\n_{\partial \F} \neq \n_{\B}$ on $\partial \F$ as is assumed in the physical derivation of the contact line boundary condition; see Section \ref{sec:ContactAngle}. 
Because $\bond \to \infty$ is a singular limit, including the physical effects due to surface tension could result in qualitative changes in the 
sloshing modes near $\partial \F$, including the location of high spots.
These questions will be addressed  in forthcoming work using computational methods by investigating eigenfunctions near $\partial \F$  for large but finite $\bond$.

In \cite{Troesch:1965aa}, the variational formulation \eqref{eq:SteklovVF} is used to find the shape of the axisymmetric container with fixed volume that maximizes the fundamental  eigenvalue. In this work, it is assumed that (i) the container is very shallow and (ii) effects due to surface tension are neglected. It would be of interest to extend this work by addressing these two assumptions.

 In \cite{Troesch:1972aa} it is shown that there exist vessel geometries, referred to as  \emph{isochronous containers},  with the remarkable property that the fundamental sloshing  frequency of a fluid is independent of the level to which the container is filled. Such geometries are shown to exist not only for the fundamental mode but for higher  modes as well. In this work, and recent papers which significantly extend this work  \cite{Weidman:2016aa, Weidman:2016bb, Weidman:2016cc}, axisymmetric isochronous containers are found by using the \emph{inverse method of solution}. It would be interesting to include the effect of surface tension in this work.

\subsection*{Acknowledgements} \ We would like to thank James Keener for helpful discussions and comments.


\appendix
\section{Physical Derivation} \label{sec:AppA}
A complete derivation of the nonlinear water wave equations can be found in \cite{Lamb:1932aa, Acheson:1990aa, Ibrahim:2005aa,  Leal:2007aa}. We are concerned with an irrotational flow of an incompressible, inviscid fluid with constant density, occupying a bounded region $\tilde\D\subset\R^3$ in a simply-connected container with a rigid bottom $\B$. Denote by $\tilde \u(\tilde \x,\tilde t)$ the fluid velocity field and $\tilde z=\tilde\eta(\tilde x,\tilde y,\tilde t)$ the displacement of the disturbed fluid free surface from the plane $z=0$.

Irrotationality means $\tilde\u$ has zero curl, which gives existence of a velocity potential $\tilde\phi(\tilde\x,\tilde t)$ such that $\tilde\u = \nabla\tilde\phi=(\tilde\phi_{\tilde x},\tilde\phi_{\tilde y},\tilde\phi_{\tilde z})$. This combined with incompressibility condition shows that $\tilde\phi$ satisfies Laplace's equation
\begin{equation}
\nabla\cdot\tilde\u = \nabla\cdot\nabla\tilde\phi = \Delta\tilde\phi=0\qquad\textrm{ in }\D_T. \label{eq:Derivation1}
\end{equation}
No penetration boundary condition is imposed on the wetted  boundary $\B$, so that
\begin{equation}
\tilde\u\cdot\n_\B = \nabla\tilde\phi\cdot\n_\B = \partial_\n\tilde\phi = 0\qquad\textrm{ on }\B. \label{eq:Derivation2}
\end{equation}
The fluid free surface is an interface between gas and liquid. Such an interface requires two boundary conditions. First, a \emph{kinematic boundary condition} which requires the normal fluid velocity of a fluid particle on the free surface to equal the normal velocity of the free surface itself. This means that fluid particles on the free surface must remain on the free surface. Defining an implicit form $G(\tilde\x,\tilde t) = \tilde z - \tilde\eta(\tilde x,\tilde y,\tilde t)$, it follows that the material derivative of $G$ is 0 at the free surface, {\it{i.e.}}
\begin{alignat}{2}
\quad  \tilde\eta_{\tilde t} + \nabla\tilde\phi\cdot\nabla(\tilde\eta - \tilde z) & = 0 &&  \qquad\textrm{ on }\tilde z=\tilde\eta(\tilde x,\tilde y,\tilde t). \label{eq:Derivation3}
\end{alignat}
Second, a dynamic boundary condition  balances the forces at the free surface. Due to the effects of surface tension, there is a pressure jump across the free surface. Assuming constant surface tension, $T$, the normal stress balance equation has the form
\begin{equation}
p_\textrm{fluid}(\tilde\x,\tilde t) - p_\textrm{atm} = T\nabla\cdot\n_{\F_T}\qquad\textrm{ on }\tilde z=\tilde\eta(\tilde x,\tilde y,\tilde t), \label{eq:Bernoulli1}
\end{equation}
where $\n_{\F_T}$ is the outward unit normal to the free surface $\tilde z=\tilde\eta(\tilde x,\tilde y,\tilde t)$. Equation \eqref{eq:Bernoulli1} can be written in terms of the velocity potential using Bernoulli's principle, which is a reduction of the Navier-Stokes equation for an inviscid fluid.  For unsteady irrotational flow, Bernoulli's principle is
\[ \tilde\u_{\tilde t} = -\nabla\left(\frac{p_\textrm{fluid}}{\rho} + \frac{1}{2}| \tilde \u|^2 + g\tilde z\right). \]
Substituting $\tilde\u = \nabla\tilde\phi$, rearranging, and integrating with respect to time gives
\begin{equation}
\tilde\phi_{\tilde t} + \frac{p_\textrm{fluid}}{\rho} + \frac{1}{2}|\nabla\tilde\phi|^2 + g\tilde z = H(\tilde t), \label{eq:Bernoulli2}
\end{equation}
where $H(\tilde t)$ is an arbitrary function of time only, which we may conveniently choose to be $p_\textrm{atm}/\rho$. The consequence is that upon evaluating \eqref{eq:Bernoulli2} at $\tilde z=\tilde\eta(\tilde x,\tilde y,\tilde t)$ and using \eqref{eq:Bernoulli1} we are left with
\begin{equation}
\tilde\phi_{\tilde t} + \frac{1}{2}|\nabla\tilde\phi|^2 + g\tilde z = -\frac{T}{\rho}\nabla\cdot\n_{\F_T}\qquad\textrm{ on }\tilde z=\tilde\eta(\tilde x,\tilde y,\tilde t). \label{eq:Derivation4}
\end{equation}

Writing the implicit form of the free surface $G(\tilde\x,\tilde t) = \tilde z - \tilde\eta(\tilde x,\tilde y,\tilde t)$ as before, its outward unit normal is given by 
\begin{equation}
\n_{\F_T} = \frac{\nabla G}{|\nabla G|} = \frac{-\tilde\eta_{\tilde x}\X - \tilde\eta_{\tilde y}\y  + \z}{\sqrt{1+\tilde\eta_{\tilde x}^2 + \tilde\eta_{\tilde y}^2}}, \label{eq:Normal1}
\end{equation}
where $\X, \y, \z$ are the unit basis vectors in Cartesian coordinates. Computing $\nabla\cdot\n_{\\F_T}$ gives
\begin{align}\label{eq:Normal2}
\nabla\cdot\n_{\F_T} 
 = \frac{-(\tilde\eta_{\tilde x\tilde x} + \tilde\eta_{\tilde y\tilde y}) - (\tilde\eta_{\tilde x\tilde x}\tilde\eta_{\tilde y}^2 + \tilde\eta_{\tilde y\tilde y}\tilde\eta_{\tilde x}^2) + 2\tilde\eta_{\tilde x}\tilde\eta_{\tilde y}\tilde\eta_{\tilde x\tilde y}}{(1+\tilde\eta_{\tilde x}^2 + \tilde\eta_{\tilde y}^2)^{3/2}}. 
\end{align}
The contact-line boundary condition, 
\begin{equation}
0 = \n_\B\cdot\n_{\F_T} \qquad\textrm{ on }\partial\F_T, \label{eq:Derivation5}
\end{equation}
is derived in Section \ref{sec:ContactAngle}. 
We then nondimensionalize the system of PDEs \eqref{eq:Derivation1}, \eqref{eq:Derivation2}, \eqref{eq:Derivation3}, \eqref{eq:Derivation4}, \eqref{eq:Derivation5} with dimensionless variables in \eqref{eq:NonDim}, which results in the \emph{nonlinear sloshing problem with surface tension} \eqref{eq:Slosh1}. 

We consider an equilibrium solution $(\phi_0,\eta_0)=(c,0)$ of \eqref{eq:Slosh1}, where $c$ is any constant scalar function (which gives zero velocity field). Assuming the free surface displacement $\eta$ is a small perturbation of $\{z=0\}$,  we look for solutions of the form 
\begin{equation*}
\phi(x,y,z,t) = c + \varepsilon\hat\phi(x,y,z,t) 
\quad \textrm{and} \quad 
\eta(x,y,t) = \varepsilon\hat\eta(x,y,t), 
\end{equation*}
where $\varepsilon>0$ is some small parameter and collect  $\O(\varepsilon)$ terms. 

Next, we Taylor expand $\hat\phi$ and its derivatives around $z=0$. This transforms the boundary conditions,  \eqref{eq:Slosh1c} and \eqref{eq:Slosh1d}, from $\F_T$ to $\F$.  Consequently, the time-dependent  linearized problem for \eqref{eq:Slosh1} has the form
\begin{subequations}\label{eq:Slosh2}
\begin{alignat}{2}
\Delta\phi & = 0 && \qquad\textrm{ in }\D, \\
\partial_\n\phi & = 0 && \qquad\textrm{ on }\B, \\
\eta_t & = \phi_z && \qquad\textrm{ on }\F, \\
\phi_t + \eta & = \frac{1}{\bond}(\eta_{xx} + \eta_{yy}) && \qquad\textrm{ on }\F, \\
\partial_\n\eta & = 0 && \qquad\textrm{ on }\partial\F, 
\end{alignat}
\end{subequations}
where $\phi_t,\eta_t$ denotes the partial derivative of $\phi,\eta$ with respect to time $t$. 

Finally, \eqref{eq:SloshingTension} is obtained by  seeking time harmonic solutions (with angular frequency $\omega$ and phase shift $\delta$) via the ansatz $ \hat\phi(x,y,z,t) = \Phi(x,y,z)\cos(\omega t + \delta)$ and $ \hat\eta(x,y,t) =\xi(x,y)\sin(\omega t + \delta),$ where $\Phi(x,y,z)$ and $\xi(x,y)$ are the sloshing velocity potential and height, respectively. 

\clearpage
{
\bibliographystyle{siamplain}
\bibliography{reference.bib} }

\end{document}